\def\@fnsymbol#1{\textsuperscript{\it{\ifcase#1\or a\or b\or c\or d\or e\or
	f\else\@ctrerr\fi}}}
\theoremstyle{plain}
\newtheorem{thm}{Theorem}[section]
\newtheorem{lem}[thm]{Lemma}
\newtheorem{prop}[thm]{Proposition}
\newtheorem{cor}[thm]{Corollary}
\theoremstyle{definition}
\newtheorem{defn}[thm]{Definition}
\newtheorem*{remark}{Remark}
\def\sr{sign-re\-vers\-ing}
\def\spp{simple principal pivot}
\def\ole{\overline{e}}
\def\olf{\overline{f}}
\def\olF{\overline{F}}
\def\ulX{\underline{X}}
\def\ulY{\underline{Y}}
\DeclareMathOperator{\sgn}{sgn}
\let\reflection\Re
\def\M{\mathcal{M}}
\def\V{\mathcal{V}}
\def\C{\mathcal{C}}
\def\D{\mathcal{D}}
\def\R{\mathbb{R}}
\def\T{^{\mathrm{T}}}
\def\veref#1{(V\ref{#1})}
\def\ciref#1{(C\ref{#1})}
\let\backslash\setminus
\title{Combinatorial Characterizations of~K-matrices\thanks
	{This research was partially supported by the project
	`A Fresh Look at the Complexity of Pivoting in Linear Complementarity'
	no.~200021-124752~/~1 of the Swiss National Science Foundation.}}
\author{Jan Foniok%
\thanks{ETH Zurich, Institute for Operations Research, 8092 Zurich, Switzerland}
\thanks{foniok@math.ethz.ch}
\and
Komei Fukuda%
\footnotemark[2]
\thanks{ETH Zurich, Institute of Theoretical Computer Science, 8092 Zurich, Switzerland}
\thanks{fukuda@math.ethz.ch}
\and
Lorenz Klaus%
\footnotemark[2]
\thanks{lklaus@math.ethz.ch}}
\begin{document}

\maketitle

\begin{abstract}
We present a number of combinatorial characterizations of
K-matrices.  This extends a theorem of Fiedler and Pt\'ak
on linear-algebraic characterizations of K-matrices to
the setting of oriented matroids.  Our proof is
elementary and simplifies the original proof substantially
by exploiting the duality of oriented matroids.
As an application, we show that a simple
principal pivot method applied to the linear complementarity
problems with K-matrices converges very quickly,
by a purely combinatorial argument.

\bigskip\noindent
\textbf{Key words:} 
P-matrix,
K-matrix,
oriented matroid,
linear complementarity

\noindent
\textbf{2010 MSC:} 15B48, 52C40, 90C33
\end{abstract}

\section{Introduction}

A matrix $M\in\R^{n\times n}$
is a \emph{P-matrix} if all its principal minors (determinants of
principal submatrices) are positive; it is a \emph{Z-matrix} if all
its off-diagonal elements are non-positive; and it is a \emph{K-matrix}
if it is both a P-matrix and a Z-matrix.

Z- and K-matrices often occur in a wide variety of areas such as
input--output production and growth models in economics, finite
difference methods for partial differential equations, Markov processes
in probability and statistics, and linear complementarity problems in
operations research~\cite{BerPle:NNM}.

In 1962, Fiedler and Pt\'ak~\cite{FiePta:On-matrices-with} listed thirteen
equivalent conditions for a Z-matrix to be a K-matrix. Some of them
concern the sign structure of vectors:

\begin{thm}[Fiedler--Pt\'ak~\cite{FiePta:On-matrices-with}]
\label{thm:FP}
Let $M$ be a Z-matrix. Then the following conditions are equivalent:
\begin{compactenum}[\rm(a)]
\item There exists a vector $x\ge0$ such that $Mx>0$;
\item there exists a vector $x>0$ such that $Mx>0$;
\item the inverse~$M^{-1}$ exists and $M^{-1}\ge 0$;
\item for each vector $x\ne0$ there exists an index~$k$ such that $x_ky_k>0$ for
$y=Mx$;
\item all principal minors of~$M$ are positive
(that is, $M$~is a P-matrix, and thus a \mbox{K-matrix}).
\end{compactenum}
\end{thm}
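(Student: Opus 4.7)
The plan is to prove the cycle $\mathrm{(a)} \Rightarrow \mathrm{(b)} \Rightarrow \mathrm{(c)} \Rightarrow \mathrm{(a)}$ together with the bridge implications $\mathrm{(b)} \Rightarrow \mathrm{(d)}$ and $\mathrm{(d)} \Rightarrow \mathrm{(c)}$. Condition $\mathrm{(e)}$ is connected to $\mathrm{(d)}$ by the classical Gale--Nikaido characterization of P-matrices, which does not rely on the Z-property, and I would simply cite it.

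Several implications are essentially free. $\mathrm{(b)} \Rightarrow \mathrm{(a)}$ is trivial, and $\mathrm{(c)} \Rightarrow \mathrm{(a)}$ follows by taking $x = M^{-1}\mathbf{1}$, which is non-negative by hypothesis and satisfies $Mx = \mathbf{1} > 0$. For the converse $\mathrm{(a)} \Rightarrow \mathrm{(b)}$ I would invoke the Z-property directly: if a witness $x \ge 0$ for (a) has $x_i = 0$, then $(Mx)_i = \sum_{j \ne i} m_{ij} x_j$ is a sum of products of non-positive coefficients and non-negative components, hence $\le 0$, contradicting $(Mx)_i > 0$. So $x$ is automatically strictly positive.

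The technical core is a max-ratio argument that I would use twice. Fix a witness $x^* > 0$ with $Mx^* > 0$. For an arbitrary vector $x$, put $t = \max_i x_i / x^*_i$ and let $k$ attain the maximum; then $w := tx^* - x$ satisfies $w \ge 0$ and $w_k = 0$, so the Z-property gives $(Mw)_k = \sum_{j \ne k} m_{kj} w_j \le 0$, which rearranges to $(Mx)_k \ge t(Mx^*)_k$. Applied to any nonzero $x$, after negating if necessary to arrange $t > 0$, this yields $x_k = tx^*_k > 0$ and $(Mx)_k > 0$, proving $\mathrm{(b)} \Rightarrow \mathrm{(d)}$. Running the analogous argument with the \emph{minimum} ratio on a vector $z$ satisfying $Mz \ge 0$ forces $\min_i z_i/x^*_i \ge 0$ and hence $z \ge 0$; specializing first to $Mz = 0$ yields invertibility of $M$, and then to $z = M^{-1} e_j$ yields $M^{-1} \ge 0$, i.e., $\mathrm{(b)} \Rightarrow \mathrm{(c)}$.

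The step I expect to be the main obstacle is the closing implication $\mathrm{(d)} \Rightarrow \mathrm{(c)}$, which must combine (d) with the Z-property \emph{without} any positive certificate in hand. Given $Mz \ge 0$, set $J = \{i : z_i < 0\}$ and let $\tilde z$ agree with $z$ on $J$ and vanish outside. Suppose for contradiction $J \ne \emptyset$: applying (d) to the non-zero $\tilde z$ produces an index $k$ with $\tilde z_k (M\tilde z)_k > 0$, and since $\tilde z_k \ne 0$ forces $k \in J$, this yields $(M\tilde z)_k < 0$. On the other hand $z - \tilde z \ge 0$ is supported outside $J$, so at row $k \in J$ every coefficient $m_{kj}$ with $j \notin J$ is non-positive, giving $(M(z - \tilde z))_k \le 0$ and hence $(M\tilde z)_k = (Mz)_k - (M(z - \tilde z))_k \ge 0$, a contradiction. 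This restriction-to-negative-support construction is the combinatorial heart of the theorem and would be my focus of attention; I suspect it is also precisely the place where the paper's reformulation via oriented-matroid duality achieves its simplification.
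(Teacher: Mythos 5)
Your proof is correct, but it takes a genuinely different route from the paper's. The paper never argues Theorem~\ref{thm:FP} directly at the matrix level; it states it as a known result and later extracts a combinatorial proof by specializing the oriented-matroid statement, Theorem~\ref{thm:eqK}. That proof runs a one-way chain $\mathrm{(a)}\Rightarrow\mathrm{(b)}\Rightarrow\mathrm{(c)}\Rightarrow\mathrm{(d)}\Rightarrow\mathrm{(a^\ast)}$ using the vector- and circuit-elimination axioms, and then closes the cycle not by any direct argument from $\mathrm{(d)}$ back to $\mathrm{(c)}$ or $\mathrm{(a)}$, but by applying the same four implications to the reflection $\reflection(\M^\ast)$ of the dual matroid; the detour through the starred (transposed) conditions is where the paper deliberately places the weight, since that duality is exactly what it wants to exhibit. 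You instead argue directly with matrices: a fixed positive certificate $x^\ast$ and max/min-ratio comparisons handle $\mathrm{(b)}\Rightarrow\mathrm{(d)}$ and $\mathrm{(b)}\Rightarrow\mathrm{(c)}$, and the restriction-to-negative-support construction carries $\mathrm{(d)}\Rightarrow\mathrm{(c)}$ --- you rightly identify this last step as the combinatorial heart, and it is the same piece of the puzzle that the paper dispatches by duality. Your route is short, self-contained, and closer to Fiedler and Pt\'ak's original spirit; the paper's buys a statement valid for all (including non-realizable) Z-matroids and makes the dual symmetry visible. Two small points: once $\mathrm{(b)}\Rightarrow\mathrm{(d)}\Rightarrow\mathrm{(c)}$ are in place, your separate $\mathrm{(b)}\Rightarrow\mathrm{(c)}$ is logically redundant (harmless), and the equivalence $\mathrm{(d)}\Leftrightarrow\mathrm{(e)}$ that you cite as Gale--Nikaido is attributed in this paper to Fiedler and Pt\'ak; cf.\ Theorem~\ref{thm:3.1}.
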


Our interest in K-matrices originates in the \emph{linear
complementarity problem (LCP)}, which is for a given matrix $M \in
\mathbb{R}^{n \times n}$ and a given vector $q \in \mathbb{R}^{n}$
to find two vectors $w$ and $z$ in $\mathbb{R}^{n}$ so that
\begin{equation}
\label{eq:LCP}
\begin{aligned}
 w-Mz&=q, \\ 
 w,z &\geq 0,\\
 w\T z&=0.
\end{aligned}
\end{equation}

In general, the problem to decide whether a LCP has a solution
is NP-com\-plete~\cite{Chung:Hardness,KojMegNom:A-unified}.  If the matrix~$M$
is a P-matrix, however, a unique solution to the LCP always
exists~\cite{STW}. Nevertheless, no polynomial-time algorithm to find
it is known, nor are hardness results for this intriguing class of LCPs.
It is unlikely to be NP-hard, because that would imply that
NP${}={}$co-NP~\cite{Meg:A-Note-on-the-Complexity}.  Recognizing whether a
matrix is a P-matrix is co-NP-complete~\cite{Cox:The-P-matrix}.  For some
recent results, see also~\cite{MorNam:sandwiches}.

If the matrix~$M$ is a Z-matrix, a polynomial-time (pivoting) algorithm
exists~\cite{Cha:A-special} (see also \cite[sect.~8.1]{Mur:Linear})
that finds the solution or concludes that no
solution exists. Interestingly, LCPs over this simple class of matrices
have many practical applications (pricing of American options, portfolio
selection problems, resource allocation problems).

A frequently considered class of algorithms to solve LCPs is the class of
\emph{simple principal pivoting methods} (see Section~\ref{sec:algo} or
\cite[Sect.~4.2]{CotPanSto:LCP}). We speak about a \emph{class} of algorithms
because the concrete algorithm depends on a chosen \emph{pivot
rule}. It has recently been proved in~\cite{FonFukGar:Pivoting} that
a simple principal pivoting method with \emph{any} pivot rule takes
at most a number of pivot steps linear in~$n$ to solve a LCP with a
K-matrix~$M$.

The study of pivoting methods and pivot rules has led to the devising
of \emph{combinatorial abstractions} of LCPs. One such abstraction is
unique-sink orientations of cubes~\cite{StiWat:Digraph-models}; the one
we are concerned with here is one of oriented matroids.

Oriented matroids were pioneered by Bland and Las Vergnas~\cite{bl-om-78}
and Folkman and Lawrence~\cite{fl-om-78}. Todd~\cite{t-com-84} and
Morris~\cite{Mor:Oriented} gave a combinatorial generalization of
LCPs by formulating the complementarity problem of oriented matroids
(OMCP).
Morris and Todd~\cite{MorTod:Symmetry} studied properties
of matroids extending LCPs with symmetric and positive
definite matrices.
Todd~\cite{t-com-84} proposed a generalization of Lemke's
method~\cite{Lemke:Bimatrix} to solve the OMCP. Later Klafszky and
Terlaky~\cite{KlaTer:Some} and Fukuda and Terlaky~\cite{FukTer:Linear}
proposed a generalized criss-cross method; in~\cite{FukTer:Linear}
it is used for a constructive proof of
a duality theorem for OMCPs in sufficient matroids (and hence also for
LCPs with sufficient matrices).
Hereby we revive their approach.

In this paper, we present a combinatorial generalization
(Theorem~\ref{thm:eqK}) of the Fiedler--Pt\'ak Theorem~\ref{thm:FP}. To
this end, we devise oriented-matroid counterparts of the conditions
(a)--(d). If the oriented matroid in question is realizable as the sign
pattern of the null space of a matrix, then our conditions are equivalent
to the conditions on the realizing matrix. In general, however,
our theorem is stronger because it applies also to nonrealizable
oriented matroids.

As a by-product, our proof yields a new, purely combinatorial proof of
Theorem~\ref{thm:FP}. Rather than on algebraic properties, it relies
heavily on oriented matroid duality.

We then use our characterization theorem to show  that an OMCP
on an $n$-dimensional K-matroid (that is, a matroid satisfying
the equivalent conditions of Theorem~\ref{thm:eqK}) is solved by any
pivoting method in at most~$2n$ pivot steps. This implies the result
of~\cite{FonFukGar:Pivoting} mentioned above that any simple principal
pivoting method is fast for K-matrix LCPs.

\section{Oriented matroids}

The theory of oriented matroids provides a natural concept which not only
generalizes combinatorial properties of many geometric configurations
but presents itself in many other areas as well, such as topology and
theoretical chemistry.

\subsection{Definitions and basic properties}

Here we state the definitions and basic properties of oriented matroids
that we need in our exposition. For more on oriented matroids consult,
for instance,~the book~\cite{bk-om}.

Let $E$ be a finite set of size $n$. A \emph{sign vector} on~$E$
is a vector~$X$ in $\left\lbrace +1,0,-1 \right\rbrace^{E}$. Instead
of~$+1$, we write just~$+$; instead of~$-1$, we write just~$-$. We define
$X^{-}=\left\lbrace e \in E : X_{e}=- \right\rbrace$, $X^{\ominus}=\{e\in
E: X_e={-} \text{ or } X_e=0\}$, and the sets $X^{0}$, $X^{\oplus}$
and $X^{+}$ analogously. For any subset $F$ of $E$ we write $X_{F}
\geq 0$ if $F \subseteq X^{\oplus}$, and $X_{F} \leq 0$ if $F \subseteq
X^{\ominus}$; furthermore $X\ge0$ if $X_{E}\ge0$ and $X\le0$ if
$X_{E}\le0$. The \emph{support} of a sign vector~$X$ is $\ulX = X^+\cup
X^-$. The \emph{opposite} of $X$ is the sign vector~$-X$ with $(-X)^{+}=X^{-}$,
$(-X)^{-}=X^{+}$ and $(-X)^{0}=X^{0}$. The \emph{composition} of two
sign vectors $X$ and $Y$ is given by
\[ (X \circ Y)_e = \begin{cases}
X_{e} & \text{if  $X_{e} \neq 0$,} \\
Y_{e} & \text{otherwise.}
\end{cases}\]
The \emph{product} $X \cdot Y$ of two sign vectors is the sign vector given by
\[(X \cdot Y)_{e}=
\begin{cases}
0 & \text{if } X_{e}=0 \text{ or } Y_{e}=0, \\
+ & \text{if } X_{e}=Y_{e} \text{ and } X_{e} \neq 0, \\
- & \text{otherwise.}
\end{cases} \]

\begin{defn} \label{def:vecM}
An \emph{oriented matroid} on $E$ is a pair $\M=(E,\mathcal{V})$, where $\mathcal{V}$ is a set of sign vectors on $E$ satisfying the following axioms:

\begin{compactenum}[(V1)]
\item $0\in \mathcal{V}$. \label{v1}
\item If $X \in \mathcal{V}$, then $-X \in \mathcal{V}$.  \label{v2}
\item If $X,Y \in \mathcal{V}$, then $X \circ Y \in \mathcal{V}$. \label{v3}
\item If $X,Y \in \mathcal{V}$ and $e \in X^+\cap Y^-$,
	then there exists $Z\in\V$ with $Z^+\subseteq X^+\cup Y^+$,
	$Z^-\subseteq X^-\cup Y^-$, 
	$Z_e=0$, and
	$(\ulX\setminus\ulY) \cup (\ulY\setminus\ulX) \cup (X^+\cap Y^+)
	\cup (X^-\cup Y^-) \subseteq \underline{Z}$. \label{v4}
\end{compactenum}
\end{defn}

The axioms \veref{v1} up to \veref{v4} are called \emph{vector axioms};
\veref{v4} is the \emph{vector elimination axiom}.
We say that the sign vector~$Z$ is the result of a vector elimination
of $X$ and~$Y$ at element~$e$.

An important example is a matroid whose vectors are the
sign vectors of elements of a vector subspace of~$\R^n$. If $A$~is an $r\times n$ real
matrix, define
\begin{equation}
\label{eq:realmat}
\V=\{\sgn x : x\in\R^n\text{ and }Ax=0\},
\end{equation}
where $\sgn x =
(\sgn x_1,\dotsc,\sgn x_n)$. Then $\V$~is the vector set of an oriented
matroid on the set $E=\{1,2,\dotsc,n\}$. In this case we speak of
\emph{realizable} oriented matroids.

A \emph{circuit} of~$\M$ is a nonzero vector~$\C\in\V$ such that
there is no nonzero vector~$X\in\V$ satisfying $\underline{X} \subset
\underline{C}$.

\begin{prop}
Let $\M=(E,\V)$ be a matroid and let $\C$ be the collection of all its circuits. Then:
\begin{compactenum}[\rm(C1)]
\item $0\not \in \mathcal{C}$. \label{c1}
\item If $C \in \mathcal{C}$, then $-C \in \mathcal{C}$. \label{c2}
\item For all $C,D \in \mathcal{C}$, if $\underline{C} \subseteq \underline{D}$,
	then $C=D$  or $C=-D$. \label{c3}
\item If $C,D \in \mathcal{C}$, $C \neq -D$  and $e \in C^+\cap D^-$, then
	there is a $Z \in \mathcal{C}$ with
	$Z^{+} \subseteq (C^{+} \cup D^{+})\backslash \left\lbrace  e \right\rbrace$
	and $Z^{-} \subseteq (C^{-} \cup D^{-})\backslash \left\lbrace e \right\rbrace$.
	\label{c4}
\item If $C,D \in \mathcal{C}$, $e \in C^+\cap D^-$  and
	$f \in (C^{+} \backslash D^{-}) \cup (C^{-} \backslash D^{+})$, then
	there is a $Z \in \mathcal{C}$ with
	$Z^{+} \subseteq (C^{+} \cup D^{+})\backslash \left\lbrace  e \right\rbrace$,
	$Z^{-} \subseteq (C^{-} \cup D^{-})\backslash \left\lbrace  e \right\rbrace$
	and $Z_f \ne0$. \label{c4p} 
\item For every vector $X\in\V$ there exist circuits $C^1,C^2,\dotsc,C^k\in\C$ such that
	$X=C^1\circ C^2\circ\dotsb\circ C^k$ and $C^{i}_{e}\cdot C^{j}_{e} \ge 0$
	for all indices $i,j$ and all $e\in\ulX$. \label{c6}
\end{compactenum}

\medskip\noindent
Moreover, if a set $\C$ of sign vectors on~$E$ satisfies
\ciref{c1}--\ciref{c4}, then it is the set of all circuits of a unique
matroid; this matroid's vectors are then all finite compositions of
circuits from~$\C$.
\end{prop}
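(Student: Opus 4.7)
The plan is to prove the two directions separately. For the forward direction, (C1) is immediate since circuits are nonzero by definition, and (C2) follows from (V2) because $-C$ has the same support as $C$. For (C3), the minimality of $D$ forces $\underline{C}=\underline{D}$, and if in addition $C\ne\pm D$ then on this common support there must be both an element $e$ where $C$ and $D$ disagree in sign and an element $f$ where they agree; applying (V4) to $C$ and $D$ at $e$ yields a nonzero $Z\in\V$ (with $f\in\underline{Z}$) whose support is a proper subset of $\underline{C}$, contradicting the minimality of $C$. For weak elimination (C4), we apply (V4) at $e$: if the resulting $Z$ were zero, the inclusion conditions in (V4) would force $\underline{C}=\underline{D}$ with $C$ and $D$ opposite everywhere, i.e.\ $C=-D$, against the hypothesis. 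Hence $Z\ne0$, and selecting a minimal-support nonzero subvector with a sign pattern inherited from $Z$ provides the required circuit.

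The strong elimination (C5) and the conformal decomposition (C6) are the more delicate parts of the forward direction. For (C5), we induct on $|\underline{C}\cup\underline{D}|$: apply (C4) to obtain a first circuit $Z_0$ missing $e$; if already $f\in\underline{Z_0}$ we are done, otherwise combine $Z_0$ with an auxiliary circuit through $f$ via a second application of (C4), invoking the inductive hypothesis to prevent the reintroduction of $e$. For (C6), we induct on $|\underline{X}|$: if $X$ is not itself a circuit, choose a circuit $C\in\C$ whose support lies in $\underline{X}$ and whose signs conform to $X$ on $\underline{C}$, then eliminate to produce a vector $X'\in\V$ of strictly smaller support still conforming to $X$; iterate.

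For the converse direction, assume $\C$ satisfies (C1)--(C4) and let $\V$ be the set of all finite compositions of vectors from $\C\cup\{0\}$. Axioms (V1), (V2), and (V3) are immediate from (C1), (C2), and the definition of $\V$ as a composition closure. The crux is (V4): given $X,Y\in\V$ and $e\in X^+\cap Y^-$, decompose $X$ and $Y$ into conforming circuits, select $C^i$ and $D^j$ with $C^i_e=+$ and $D^j_e=-$, apply the strong elimination property---which we first derive from (C1)--(C4) by the same induction as in the forward direction---and compose the resulting circuit with the remaining circuits of $X$ and $Y$ to obtain a sign vector satisfying (V4). Uniqueness of $\M$ follows at once because $\C$ is intrinsically characterized as the set of nonzero minimal-support vectors in $\V$, so $\C$ determines $\V$ as its composition closure. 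The main obstacle is the strong elimination property together with its use in establishing (V4) for the converse: both rest on a delicate simultaneous induction that controls the eliminated element $e$ while preserving the other element $f$ and the overall sign pattern through nested applications of (C4).
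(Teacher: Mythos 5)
The paper does not prove this proposition. It is presented as standard background on oriented matroids (with a pointer to the book by Bj\"orner et al.\ in the preceding paragraph), so there is no paper proof to compare against; the content here predates the paper and belongs to the foundational literature on circuit and vector axioms.

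Your sketch follows the broad textbook outline but leaves the two hardest points essentially unproved, and in both cases the gap is the same lemma: \emph{every nonzero vector $X\in\V$ contains a circuit conformal to it}, i.e.\ a circuit $C$ with $C^+\subseteq X^+$, $C^-\subseteq X^-$. In (C4) you invoke it under the phrase ``selecting a minimal-support nonzero subvector with a sign pattern inherited from $Z$,'' and in (C6) under ``choose a circuit whose signs conform to $X$.'' This is not automatic: a minimal-support nonzero vector inside $\underline{Z}$ need not conform to $Z$, and turning it into a conforming one requires its own induction using (V4). Stating that you ``select'' such a thing is precisely where the work is. Similarly, in the converse direction the claim that the composition closure of $\C$ satisfies (V4) by ``apply strong elimination to one circuit pair and compose with the rest'' underestimates the difficulty: the eliminated vector $Z$ produced this way can easily violate the support lower bound $(\ulX\setminus\ulY)\cup(\ulY\setminus\ulX)\cup(X^+\cap Y^+)\cup(X^-\cap Y^-)\subseteq\underline{Z}$ required by (V4), and composing with ``the remaining circuits'' may reintroduce a nonzero entry at $e$ unless those circuits all vanish there, which your decomposition does not guarantee. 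The equivalence of circuit and vector axioms is a genuine theorem with a multi-step proof; a sketch at this level of detail would not be accepted as a replacement for it. Since the paper itself offers no proof, the appropriate move is to cite the standard reference rather than attempt to reconstruct the argument from scratch.
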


The property \ciref{c4} is called \emph{weak circuit elimination};
\ciref{c4p} is called \emph{strong circuit elimination}.
In \ciref{c6} we speak about a \emph{conformal decomposition} of a vector into circuits.

A \emph{basis} of an oriented matroid~$\mathcal{M}$ is an
inclusion-maximal set $B \subseteq E$ for which there is no circuit~$C$
with $\underline{C} \subseteq B$. Every basis~$B$ has the same size,
called the \emph{rank} of~$\mathcal{M}$.

\begin{prop}
Let $B$ be a basis of an oriented matroid~$\M$.  For every $e$
in $E \backslash B$ there is a unique circuit $C(B,e)$ such that
$\underline{C(B,e)} \subseteq B \cup \left\lbrace e\right\rbrace$
and $C(B,e)_{e}=+$.
\end{prop}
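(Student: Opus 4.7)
The plan is to establish existence and uniqueness separately, relying on the defining property of a basis (namely, that $B$ is inclusion-maximal among subsets of $E$ containing no circuit's support) together with the circuit axioms \ciref{c2} and \ciref{c4}.

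For existence, I would exploit the maximality of $B$ directly. By definition, adding any element $e \in E \setminus B$ breaks the property of containing no circuit's support, so there exists a circuit $C \in \C$ with $\underline{C} \subseteq B \cup \{e\}$. Crucially, $e$ must lie in $\underline{C}$, for otherwise $\underline{C} \subseteq B$, contradicting the basis property of $B$. Hence $C_e \in \{+,-\}$, and by \ciref{c2} we may pass to $-C$ if necessary to arrange $C_e = +$.

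For uniqueness, suppose $C$ and $D$ are both circuits with support in $B \cup \{e\}$ and $C_e = D_e = +$. The goal is to derive a contradiction from $C \neq D$ by producing a circuit whose support lies entirely in $B$. The natural tool is the weak circuit elimination axiom \ciref{c4} applied to $C$ and $-D$: since $(-D)_e = -$, the element $e$ lies in $C^+ \cap (-D)^-$, and the hypothesis ``$C \neq -(-D) = D$'' of \ciref{c4} is exactly what we are assuming. Axiom \ciref{c4} then yields $Z \in \C$ with $\underline{Z} \subseteq (\underline{C} \cup \underline{D}) \setminus \{e\} \subseteq B$, contradicting maximality of $B$.

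The main subtlety, rather than a genuine obstacle, is verifying that \ciref{c4} is applicable, which amounts to checking that $C$ and $-D$ are not opposite circuits; this follows immediately from $C_e = D_e = +$, since $C = -(-D) = D$ was exactly the case we excluded. Everything else is routine bookkeeping with sign vectors, and no appeal to \ciref{c4p} or \ciref{c6} is required.
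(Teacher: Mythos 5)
Your proof is correct. The paper states this proposition as a standard background fact and gives no proof, so there is no argument of the paper's to compare against; your argument via the maximality of $B$ for existence and weak circuit elimination \ciref{c4} applied to $C$ and $-D$ for uniqueness is the natural and standard one, and the bookkeeping (in particular the check that the non-oppositeness hypothesis of \ciref{c4} reduces to $C\neq D$) is carried out correctly.
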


Such a circuit $C(B,e)$ is called the \emph{fundamental circuit} of~$e$
with respect to~$B$.

\bigskip 

Two sign vectors $X$ and $Y$ are \emph{orthogonal} if the set $\left\lbrace
X_{e} \cdot Y_{e} : e \in E \right\rbrace$ either equals $\left\lbrace
0\right\rbrace$ or contains both $+$ and $-$. We then write $X \perp Y$.

\begin{prop}
For every oriented matroid $\mathcal{M}=(E,\mathcal{V})$ of rank $n$
there is a unique oriented matroid $\mathcal{M}^{*}=(E,\mathcal{V}^{*})$
of rank $\left| E \right| - n$ given by
 $$ \mathcal{V}^{*} = \left\lbrace Y \subseteq \left\lbrace -,0,+ \right\rbrace^{E} :
 X \perp Y \text{ for every } X \in \mathcal{V} \right\rbrace.$$
\end{prop}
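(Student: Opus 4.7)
The plan is to verify that $\V^*$ satisfies the four vector axioms \veref{v1}--\veref{v4}. Uniqueness is then immediate because $\V^*$ is pinned down by the orthogonality definition, and the rank equality $\lvert E\rvert-n$ will follow from classical (unoriented) matroid duality applied to the underlying matroid of~$\M$, once the cocircuits of that matroid are identified with the supports of the circuits of~$\M^*$.

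Axioms \veref{v1} and \veref{v2} are immediate: the zero vector is orthogonal to every sign vector, and if $X\perp Y$ then $X\perp -Y$ because negating~$Y$ just swaps $+$ and $-$ in the set $\{X_e\cdot Y_e:e\in E\}$, preserving the ``either $\{0\}$ or contains both signs'' property.

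For \veref{v3}, I would fix $Y,Y'\in\V^*$ and an arbitrary $X\in\V$ and split on the set $S=\{X_e\cdot Y_e:e\in E\}$. If $S=\{0\}$, then $Y_e=0$ whenever $X_e\ne0$, so $(Y\circ Y')_e=Y'_e$ on~$\ulX$, giving $X\cdot(Y\circ Y')=X\cdot Y'$, which has the orthogonality property by $X\perp Y'$. If instead $S$ contains both $+$ and $-$, any witnessing indices $a,b$ satisfy $Y_a,Y_b\ne0$, so by the definition of composition $(Y\circ Y')_a=Y_a$ and $(Y\circ Y')_b=Y_b$, and both signs persist in $X\cdot(Y\circ Y')$.

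The main obstacle is \veref{v4}. The cleanest route is to first single out the inclusion-minimal nonzero elements of~$\V^*$---the \emph{cocircuits} of~$\M$---and verify that they satisfy the circuit axioms \ciref{c1}--\ciref{c4}. The delicate step is cocircuit elimination \ciref{c4}: given two cocircuits $D,D'$ and $e\in D^+\cap D'^-$, one builds the eliminated vector on $E\setminus\{e\}$ and rules out the existence of a circuit of~$\M$ with support inside it and nonzero coordinate at~$e$, by invoking strong circuit elimination \ciref{c4p} and the conformal decomposition \ciref{c6} to contradict orthogonality. Once \ciref{c1}--\ciref{c4} hold for the cocircuit set, the proposition following the circuit axioms produces a unique oriented matroid whose vectors are all compositions of cocircuits; it remains to match this vector set with~$\V^*$, with the forward inclusion coming from \veref{v3} above and the reverse from decomposing any $Y\in\V^*$ along minimal-support orthogonal vectors contained in~$\underline{Y}$. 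Since the underlying matroid of~$\M^*$ has as its circuit supports precisely the cocircuit supports of the underlying matroid of~$\M$, its rank is $\lvert E\rvert-n$ by classical matroid duality.
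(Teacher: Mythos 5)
The paper states this proposition without proof; it is the duality theorem for oriented matroids, taken as a foundational result from the literature (see the cited book by Bj\"orner et al.), so there is no in-paper argument to compare against. Your verification of \veref{v1}--\veref{v3} is correct, and the case split in \veref{v3} on whether $\{X_e\cdot Y_e : e\in E\}$ equals $\{0\}$ or contains both nonzero signs is clean and exhaustive (those are the only possibilities given $X\perp Y$).

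The gap is \veref{v4}, which is the hard content of oriented matroid duality, and your sketch does not actually discharge it. You propose to establish cocircuit elimination \ciref{c4} for the minimal nonzero elements of $\V^*$, but ``builds the eliminated vector on $E\setminus\{e\}$ and rules out the existence of a circuit of $\M$ with support inside it and nonzero coordinate at $e$'' is not a construction: a covector assigns a sign to every element of $E$, and once its support avoids $e$ there is trivially nothing to rule out. The genuine difficulty --- showing that a suitably oriented sign vector supported on the unoriented cocircuit elimination of $\underline{D}$ and $\underline{D'}$ at $e$ is orthogonal to \emph{every} circuit of $\M$, and that such an orientation exists at all --- is left untouched; invoking \ciref{c4p} and \ciref{c6} ``to contradict orthogonality'' names tools rather than using them. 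Your rank claim likewise presupposes that the circuit supports of $\M^*$ coincide with the cocircuit supports of the underlying matroid of $\M$, which is itself part of what the duality theorem asserts. In short, the easy axioms are handled, but the step that makes this a theorem is asserted, not proved.
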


This $\mathcal{M}^{*}$ is called the \emph{dual} of $\M$. Note that
$(\mathcal{M}^{*})^{*}=\mathcal{M}$. The circuits of~$\M^\ast$ are called
the \emph{cocircuits} of~$\M$ and the vectors of~$\M^\ast$ are called
the \emph{covectors} of~$\M$.
The covectors of a realizable matroid given by~\eqref{eq:realmat} are
sign vectors of the elements of the row space of the matrix~$A$.

We conclude this short overview by introducing the concept of matroid
minors and extensions. For any $F \subseteq E$, the vector $X \backslash
F$ denotes the subvector $(X_{e}: e \in E \backslash F)$ of~$X$. Then let
$$ \mathcal{V} \backslash F= \left\lbrace X \backslash F : X \in \mathcal{V}
\text{ and } X_{f}=0 \text{ for all } f \in F \right\rbrace $$
be the \emph{deletion} and
$$ \mathcal{V} \slash F= \left\lbrace X \backslash F : X \in \mathcal{V} \right\rbrace $$
the \emph{contraction} of the vectors in $\mathcal{V}$ by the elements of
$F$. It is easy to check that the pairs $\mathcal{M} \backslash F =(E
\backslash F,\mathcal{V} \backslash F)$ and $\mathcal{M} \slash F=
(E \backslash F,\mathcal{V} \slash F)$ are oriented matroids. For any
disjoint $F,G \subseteq E$ we call the oriented matroid $(\mathcal{M}
\backslash F) \slash G$ a \emph{minor} of~$\mathcal{M}$.

Note that $\M\setminus\{e,e'\}=(\M\setminus\{e\})\setminus\{e'\}$, 
$\M/\{e,e'\}=(\M/\{e\})/\{e'\}$ and
$({\M\setminus\{e\}})\slash\{e'\}=(\M/\{e'\})\setminus\{e\}$, and so
deletions and contractions can be performed element by element
in any order, with the same result.

\begin{defn}
A matroid $\hat{\mathcal{M}}=(E \cup \left\lbrace q \right\rbrace,
\hat{\mathcal{V}})$ with $q \not \in E$ is a \emph{one-point extension}
of~$\mathcal{M}$ if $\hat{\mathcal{M}} \backslash \left\lbrace
q\right\rbrace = \mathcal{M}$ and there is a vector~$X$ of~$\hat\M$
with $X_q\ne0$.
\end{defn}

\subsection{Complementarity in oriented matroids}

In the rest of the paper, we are considering oriented matroids endowed
with a special structure. The set of elements $E_{2n}$ is a $2n$-element
set with a fixed partition $E_{2n}=S\cup T$ into two $n$-element sets
and a mapping $e\mapsto \ole$ from $E_{2n}$ to~$E_{2n}$ which is an
involution (that is, $\overline{\ole}=e$ for every $e\in E_{2n}$) and for
every $e\in S$ we have $\ole\in T$. Note that this mapping constitutes
a bijection between $S$ and~$T$.

The element $\ole$ is called the \emph{complement} of $e$. For a subset
$F$ of $E_{2n}$ let $\overline{F} =\left\lbrace \overline{e} : e \in F
\right\rbrace$. A subset~$F$ of~$E_{2n}$ is called \emph{complementary}
if $F\cap \overline{F}=\emptyset$.

The matroids we are working with are of the kind $\M=(E_{2n},\V)$,
where the set $S\subseteq E_{2n}$ is a basis of~$\M$.  In
addition, we study their one-point extensions $\hat\M=(\hat
E_{2n},\hat\V)$, where $\hat E_{2n} = E_{2n}\cup\{q\}$ for some element
$q\notin E_{2n}$.
Sometimes we make the canonical choice $E_{2n}=\{1,\dotsc,2n\}$ with
$S=\{1,\dotsc,n\}$ where the complement of an $i \in S$ is the element
$i+n$.

\begin{defn}
The \emph{oriented matroid complementarity problem (OMCP)} is to find
a vector $X$ of an oriented matroid $\hat\M$ so that
\begin{subequations}
\begin{gather}
 X \in \hat{\mathcal{V}}, \label{VinV}\\ 
 X \geq 0, \text{ } X_{q}=+, \label{VPOS} \\
 X_{e} \cdot X_{\ole}=0  \text{\quad for every } e \in E_{2n}, \label{V2COMP}
\end{gather}
\end{subequations}
or to report that no such vector exists.

A vector~$X$ which satisfies~\eqref{VPOS} is called \emph{feasible}, one
which satisfies~\eqref{V2COMP} is called \emph{complementary}. Note
that a vector is complementary if and only if its support is
a complementary set. If an $X \in \hat{\mathcal{V}}$ satisfies
\eqref{VPOS} and~\eqref{V2COMP}, then $X$~is a \emph{solution} to the
OMCP($\mathcal{\hat{M}}$).
\end{defn}

Now we show how LCPs are special cases of OMCPs. Finding a solution to the
LCP~\eqref{eq:LCP} is equivalent to finding an element~$x$ of
$$V=\Bigl\{ x \in \mathbb{R}^{2n+1} :
\begin{bmatrix} I_{n} & -M & -q \end{bmatrix}x=0 \Bigr\}$$
such that\begin{equation}
\label{eq:LCPalt}
\begin{gathered}
 x \geq 0, \text{ } x_{2n+1}=1,\\
 x_{i} \cdot x_{i+n}=0 \text{\quad for every } i \in \left[ n\right].
\end{gathered}
\end{equation}
We set $\hat\V=\{\sgn x: x\in V\}$ and consider the OMCP for the matroid
$\hat\M=(\hat E_{2n},\hat\V)$. Clearly, if the OMCP has no solution, then
$V$~contains no vector~$x$ satisfying~\eqref{eq:LCPalt}. If on the other
hand there is a solution~$X$ satisfying \eqref{VinV}--\eqref{V2COMP},
then the solution to the LCP can be obtained by solving the system of
linear equations
\begin{alignat*}{2}
\begin{bmatrix} I_{n} & -M & -q \end{bmatrix}x &= 0, \\
x_i &= 0 &\quad\text{whenever $X_i=0$},\\
x_{2n+1} &= 1.
\end{alignat*}
This correspondence motivates the following definition.

\begin{defn} \label{def:realM}
An oriented matroid $\M=(E_{2n},\V)$ is \emph{LCP-realizable} if there
is a matrix $M \in \mathbb{R}^{n \times n}$ such that
\[\V = 
\Bigl\{ \sgn x : x\in\R^{2n} \text{ and }
\begin{bmatrix} I_{n} & -M  \end{bmatrix}x=0 \Bigr\}.\]
The matrix~$M$ is then a \emph{realization matrix} of~$\M$.
This is a little nonstandard, because usually the matrix~$A$
from~\eqref{eq:realmat} is called a realization matrix.
The columns of~$I_n$ are indexed by the elements of $S\subset E_{2n}$,
and the columns of~$-M$ are indexed by the elements of $T\subset E_{2n}$
so that if the $k$th column of~$I_n$ is indexed by~$e$, then the $k$th
column of~$-M$ is indexed by~$\ole$.

The extension $\hat\M=(\hat E_{2n},\hat\V)$ is \emph{LCP-realizable} if
there is a matrix $M \in \mathbb{R}^{n \times n}$ and a vector $q\in\R^n$
such that
\[\hat\V = 
\Bigl\{ \sgn x : x\in\R^{2n+1} \text{ and }
\begin{bmatrix} I_{n} & -M & -q \end{bmatrix}x=0 \Bigr\}.\]
\end{defn}

To study the algorithmic complexity of OMCPs, we must specify how the
matroid~$\hat\M$ is made available to the algorithm. We will assume that
it is given by an oracle which, for a basis~$B$ of~$\hat\M$ and a nonbasic
element $e\in\hat E_{2n}\setminus B$, outputs the unique (fundamental)
circuit~$C$ of~$\hat\M$ with support $\underline{C}\subseteq B\cup\{e\}$
such that $X_e={+}$.

In the LCP-realizable case such an oracle can be implemented in
polynomial time; in fact, it consists in solving a system of $O(n)$
linear equations in $2n+1$ variables.  Thus, in the RAM model, the
oracle can be implemented so that it performs arithmetic operations
whose number is bounded by a polynomial in~$n$. Hence our goal is to
develop an algorithm that solves an OMCP using a number of queries to
the oracle that is polynomial in~$n$.

Such an algorithm for the OMCP would obviously provide a strongly
polynomial algorithm for the LCP. Since the LCP is NP-hard in general,
the existence of such an algorithm is unlikely. In Section~\ref{sec:algo}
we do, nevertheless, prove the existence of such an algorithm for a
special class of oriented matroids: K-matroids.


\section{P-matroids}

In this and the following sections, we investigate what properties
of oriented matroids characterize oriented matroids realizable by
special classes of matrices. We start with P-matrices; recall that a
P-matrix is a matrix whose principal minors are all positive.

Several conditions are equivalent to the positivity of principal minors:

\begin{thm}
\label{thm:3.1}
For a matrix $M\in\R^{n\times n}$, the following are equivalent:
\begin{compactenum}[\rm(a)]
\item All principal minors of~$M$ are positive
	(i.e., $M$ is a P-matrix);\label{3.1-a}
\item there is no nonzero vector~$x$ such that $x_k y_k\leq0$
for all $i=1,2,\dotsc,n$, where $y=Mx$;\label{3.1-b}
\item the LCP~\eqref{eq:LCP} with the matrix~$M$ and any right-hand
side~$q$ has exactly one solution.\label{3.1-c}
\end{compactenum}
\end{thm}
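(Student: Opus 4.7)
My plan is to establish the equivalences by proving (a)~$\Leftrightarrow$~(b), invoking the cited theorem~\cite{STW} for (a)~$\Rightarrow$~(c), and closing the cycle with (c)~$\Rightarrow$~(a). For (a)~$\Rightarrow$~(b) I would argue by induction on~$n$. The case $n=1$ is immediate since $m>0$ gives $x\cdot(mx)=mx^2>0$ for every $x\ne 0$. For the inductive step, given a nonzero $x$ with $x_k(Mx)_k\le 0$ for all~$k$ and $y=Mx$: if some $x_j=0$, restrict to the principal submatrix indexed by $[n]\setminus\{j\}$, which is again a P-matrix, and apply the inductive hypothesis. If $x$ has full support, conjugate $M$ by the diagonal signature matrix $D$ with $D_{ii}=\sgn x_i$; since $DMD$ has exactly the same principal minors as~$M$, it is still a P-matrix, and the problem reduces to $x>0$, $Mx\le 0$. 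A one-step Gaussian / Schur-complement elimination on the last row and column then contradicts $\det M>0$.

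For (b)~$\Rightarrow$~(c), uniqueness is essentially free: if $(w,z)$ and $(w',z')$ both solve LCP$(M,q)$, then $x:=z-z'$ and $y:=w-w'=Mx$ satisfy $x_k y_k\le 0$ for every~$k$ by a short case analysis on the signs of $z_k,z'_k,w_k,w'_k$ using complementarity together with nonnegativity, so (b) forces $x=0$. Existence of a solution for every~$q$ is precisely the classical Samelson--Thrall--Wesler theorem cited as~\cite{STW}. For (c)~$\Rightarrow$~(a) I would argue contrapositively: if some principal minor is nonpositive, pick a minimal index set $J$ with $\det M_{JJ}\le 0$, so that all proper principal submatrices of $M_{JJ}$ are P-matrices. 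Depending on whether $\det M_{JJ}=0$ (use a nonzero vector in $\ker M_{JJ}$) or $\det M_{JJ}<0$ (cofactor expansion together with positivity of all proper principal minors of $M_{JJ}$ forces $(M_{JJ}^{-1})_{ii}<0$ for some $i\in J$), I can construct two distinct complementary feasible vectors for a suitable~$q\ge 0$, contradicting (c).

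The main obstacle is the existence direction of (a)~$\Rightarrow$~(c). Condition~(b) gives uniqueness for free, but producing a solution for \emph{every}~$q$ is the genuinely nontrivial ingredient, traditionally proved either by a topological-degree / homotopy argument in $z$-space or by a termination analysis of Lemke's algorithm specialised to P-matrix LCPs. All remaining parts of the proof amount to linear-algebra bookkeeping with principal submatrices and to the invariance of principal minors under diagonal-sign conjugation; the hardest of these is the inductive case of (a)~$\Rightarrow$~(b) where the full-support sign-reversal argument must be carefully set up to exploit $\det M>0$.
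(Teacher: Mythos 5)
The paper offers no proof of Theorem~\ref{thm:3.1}; it simply cites Fiedler--Pt\'ak for (a)$\Leftrightarrow$(b) and Samelson--Thrall--Wesler (independently Ingleton, Murty) for (a)$\Leftrightarrow$(c), so your sketch has to stand on its own. Its weak point is exactly the step you flag as hardest: the full-support case of (a)$\Rightarrow$(b). After your (correct) reduction to $x>0$, $Mx\le 0$ with $M$ a P-matrix, the advertised ``one-step Gaussian / Schur-complement elimination on the last row and column'' does not close the induction. Writing $M=\left(\begin{smallmatrix}A&b\\ c^{\mathrm T}&d\end{smallmatrix}\right)$ and $x=(u,t)$ with $u>0$, $t>0$, $d>0$, the hypotheses give $Au\le -tb$ and $c^{\mathrm T}u+td\le 0$, so the Schur complement $S=A-bc^{\mathrm T}/d$ (which is indeed a P-matrix) satisfies only $Su\le -\bigl(t+c^{\mathrm T}u/d\bigr)b$, where $-\bigl(t+c^{\mathrm T}u/d\bigr)\ge 0$. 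This yields $Su\le 0$ when $b\le 0$, i.e.\ in the Z-matrix situation relevant elsewhere in this paper, but a general P-matrix can have $b_i>0$, in which case the bound gives nothing and the inductive hypothesis cannot be applied to $S$ and $u$. (For $n=2$ one can first derive $b,c<0$ from the inequalities and then finish, but that trick does not scale.) There is also an organizational circularity: your paragraph labelled (b)$\Rightarrow$(c) obtains existence from [STW], which is (a)$\Rightarrow$(c), so you actually prove (a)$\wedge$(b)$\Rightarrow$(c) and never independently establish (b)$\Rightarrow$(a) or (b)$\Rightarrow$(c), which is needed to tie (b) into the equivalence.

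Both problems disappear if you reorganize into the cycle (a)$\Rightarrow$(c)$\Rightarrow$(b)$\Rightarrow$(a), avoiding induction and Schur complements entirely. Take (a)$\Rightarrow$(c) from [STW]. For (c)$\Rightarrow$(b): given a nonzero $x$ with $x_k(Mx)_k\le 0$ for all $k$ and $y=Mx$, decompose $x=x^+-x^-$, $y=y^+-y^-$ into positive and negative parts; one checks that $(y^+,x^+)$ and $(y^-,x^-)$ are two distinct solutions of the LCP with $q=y^- - Mx^-$, contradicting uniqueness. For (b)$\Rightarrow$(a): contrapositively, take $J$ minimal with $\det M_{JJ}\le 0$. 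If $\det M_{JJ}=0$, extend a nonzero $v\in\ker M_{JJ}$ by zeros to get $x$ with $x_k(Mx)_k=0$ for all $k$. If $\det M_{JJ}<0$, minimality gives $(M_{JJ}^{-1})_{ii}=\det M_{J\setminus\{i\},\,J\setminus\{i\}}/\det M_{JJ}<0$ for every $i\in J$; setting $x_J=-M_{JJ}^{-1}e_i$ and $x_k=0$ off $J$ gives $x_i>0$, $(Mx)_i=-1$, and $(Mx)_k=0$ for $k\in J\setminus\{i\}$, hence $x_k(Mx)_k\le 0$ for all $k$. This makes the (c)$\Rightarrow$(a) step you sketched unnecessary as a separate item, and it correctly handles the remaining bookkeeping you alluded to.
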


The equivalence of (\ref{3.1-a}) and (\ref{3.1-b}) is due to Fiedler
and Pt\'ak~\cite{FiePta:On-matrices-with}. The equivalence of
(\ref{3.1-a}) and (\ref{3.1-c}) was proved independently by Samelson,
Thrall and Wesler~\cite{STW}, Ingleton~\cite{Ing:A-problem}, and
Murty~\cite{Mur:On-the-number}.

The following notions and our definition of a P-matroid are motivated by
the condition~(\ref{3.1-b}) in Theorem~\ref{thm:3.1}. It is much easier
to express in the oriented-matroid language than~(\ref{3.1-a}).

A sign vector $X \in \{-,0,+\}^{E_{2n}}$ is \emph{sign-reversing
(s.r.)}~if $X_{e} \cdot X_{\ole} \leq 0$ for every $e \in S$. If in
addition $\underline{X}=E_{2n}$, the vector is \emph{totally
sign-reversing (t.s.r.)}. Analogously, an $X$ is \emph{sign-preserving
(s.p.)}~if $X_{e} \cdot X_{\ole} \geq 0$ for every $e$, and \emph{totally
sign-preserving (t.s.p.)}~if $\underline{X}=E_{2n}$ as well.

\begin{defn}[Todd~\cite{t-com-84}] \label{def:P}
An oriented matroid $\mathcal{M}$ on $E_{2n}$ is a \emph{P-matroid}
if it has no sign-reversing circuit.
\end{defn}

Note that a P-matroid contains no nonzero \sr\ vectors, because every
vector is the composition of some circuits and composing non-s.r.\
circuits yields non-s.r.\ vectors. Hence, using Theorem~\ref{thm:3.1},
we immediately get:

\begin{prop}
\label{realP}
~
\begin{compactenum}[\rm(i)]
\item If $\M$ is LCP-realizable and there exists a realization matrix~$M$
that is a P-matrix, then $\M$~is a P-matroid.
\item If $\mathcal{M}$ is an LCP-realizable P-matroid, then
every realization matrix~$M$ is a P-matrix.
\end{compactenum}
\end{prop}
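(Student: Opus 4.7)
The plan is to reduce both statements to the equivalence \mbox{(\ref{3.1-a})$\,\Leftrightarrow\,$(\ref{3.1-b})} in Theorem~\ref{thm:3.1} via a direct dictionary between sign vectors in $\V$ and vectors in $\R^n$. Unfolding Definition~\ref{def:realM}, an $X\in\V$ is exactly $\sgn(w,z)$ for some $(w,z)\in\R^{2n}$ with $w=Mz$, and the involution $e\mapsto\ole$ pairs the coordinate of $w$ indexed by $e\in S$ with the coordinate of $z$ indexed by $\ole\in T$. Consequently $X$ is \sr\ precisely when $w_i z_i\le 0$ for every $i\in[n]$, that is, setting $y=Mz$, when $y_i z_i\le 0$ for all $i$; and because $w=Mz$, the vector $X$ vanishes iff $z=0$.

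For part (i), assume $M$ is a P-matrix. By Theorem~\ref{thm:3.1}, condition~(\ref{3.1-b}) holds, so no nonzero $z\in\R^n$ satisfies $z_i(Mz)_i\le 0$ for every~$i$. The translation above then shows that $\V$ contains no nonzero \sr\ vector, and in particular no \sr\ circuit; hence $\M$ is a P-matroid by Definition~\ref{def:P}.

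For part (ii), assume $\M$ is an LCP-realizable P-matroid with realization matrix~$M$. The remark preceding the proposition (\emph{a P-matroid contains no nonzero \sr\ vectors}), which follows from the conformal decomposition \ciref{c6}, tells us that $\V$ has no nonzero \sr\ vector. Applying the dictionary in reverse, there is no nonzero $z\in\R^n$ with $z_i(Mz)_i\le 0$ for all~$i$, i.e., condition~(\ref{3.1-b}) of Theorem~\ref{thm:3.1} holds. Invoking the equivalence (\ref{3.1-b})$\,\Rightarrow\,$(\ref{3.1-a}) of that theorem, $M$ is a P-matrix.

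There is no real obstacle: the whole argument is a careful verification that the combinatorial notion of a \sr\ vector in the LCP-realizable matroid matches condition~(\ref{3.1-b}) of Theorem~\ref{thm:3.1} for the realization matrix. The only mild point is to observe that nonzeroness of $X=\sgn(w,z)$ reduces to $z\ne 0$, so that the ``nonzero'' quantifier on either side of the dictionary lines up.
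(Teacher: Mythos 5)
Your argument is correct and is essentially the paper's own: the paper states Proposition~\ref{realP} follows ``immediately'' from Theorem~\ref{thm:3.1} together with the preceding remark that a P-matroid has no nonzero \sr\ vector (via conformal decomposition), and your write-up simply makes that translation between \sr\ vectors of $\V$ and condition~(\ref{3.1-b}) explicit, including the small but necessary observation that $X=\sgn(w,z)\ne 0$ iff $z\ne 0$ because $w=Mz$.
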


P-matroids were extensively studied by Todd~\cite{t-com-84}. He lists
eight equivalent conditions for a matroid to be a P-matroid. We recall
three of them (conditions (a), (a*) and (c) below)
and add two new ones.

\begin{thm} \label{thm:eqP}
For an oriented matroid $\mathcal{M}$ on $E_{2n}$, the following
conditions are equivalent:

\begin{compactenum}[\rm(a)]
\item $\M$ has no s.r.~circuit;
\item[\rm(a*)] $\M$ has no s.p.~cocircuit;
\item every t.s.p.~$X$ is a vector of~$\M$;
\item[\rm(b*)] every t.s.r.~$Y$ is a covector of $\M$;
\item every one-point extension $\hat{\mathcal{M}}$ of $\mathcal{M}$
to~$\hat E_{2n}$ contains exactly one complementary circuit~$C$
such that $C\ge0$ and $C_q={+}$.
\end{compactenum}
\end{thm}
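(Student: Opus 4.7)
My plan is to prove the equivalences by establishing (a*) $\Leftrightarrow$ (b), (a) $\Leftrightarrow$ (b*), and (a) $\Leftrightarrow$ (c); these yield mutual equivalence of all five conditions. For (a*) $\Leftrightarrow$ (b), the key tool is the characterization that a sign vector $X$ lies in $\V$ if and only if $X\perp Y$ for every cocircuit~$Y$ (which follows from $(\M^*)^*=\M$). Given a totally sign-preserving $X$ and a cocircuit $D$ that is \emph{not} sign-preserving, there is some $e\in S$ with $D_e D_{\ole}=-$; since $X_e X_{\ole}=+$, the products $X_e D_e$ and $X_{\ole}D_{\ole}$ have opposite signs, so $X\perp D$ holds automatically. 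Thus (a*) forces every t.s.p.\ $X$ to be orthogonal to every cocircuit, yielding~(b). For the converse, from an s.p.\ cocircuit $D$ I build a t.s.p.\ sign vector $X\notin\V$: set $X_e=\sgn(D_e)$ on $\underline{D}$, and on each complementary pair $\{e,\ole\}\subseteq D^0$ pick $X_e=X_{\ole}$ arbitrarily in $\{+,-\}$ (the sign-preserving property of $D$ rules out any conflict on pairs where exactly one of $D_e,D_{\ole}$ vanishes). Then $X\cdot D\ge 0$ with strict inequality somewhere, so $X\not\perp D$ and $X\notin\V$. The pair (a) $\Leftrightarrow$ (b*) is established by an entirely parallel argument, using instead that $Y$ is a covector iff $Y\perp C$ for every circuit~$C$ and interchanging the roles of s.p.\ and s.r.\ throughout.

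For (a) $\Rightarrow$ (c), uniqueness is short: if $C^1\ne C^2$ are two non-negative complementary circuits of $\hat\M$ with $C^1_q=C^2_q=+$, then $C^1\ne -C^2$, and weak circuit elimination \ciref{c4} applied to $C^1$ and $-C^2$ at $q\in(C^1)^+\cap(-C^2)^-$ yields a circuit $Z$ of $\hat\M$ with $Z_q=0$, $Z^+\subseteq (C^1)^+$, and $Z^-\subseteq (C^2)^+$. Since each $C^i$ is complementary, $Z$ cannot have $Z_e=Z_{\ole}=+$ (both would lie in $(C^1)^+$) or $Z_e=Z_{\ole}=-$ (both in $(C^2)^+$). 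Hence $Z$ is a sign-reversing circuit of $\M=\hat\M\setminus\{q\}$, contradicting (a). Existence is the oriented-matroid analogue of the Samelson--Thrall--Wesler theorem; I would derive it from a complementary pivoting procedure in $\hat\M$, with termination guaranteed by the absence of sign-reversing circuits. For (c) $\Rightarrow$ (a) I argue by contrapositive: from an s.r.\ circuit $C$ of $\M$ I construct an extension $\hat\M$ in which both $\{q\}\cup C^+$ and $\{q\}\cup C^-$ are supports of non-negative complementary circuits through~$q$, destroying uniqueness.

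The main obstacle will be the existence half of (a) $\Rightarrow$ (c): uniqueness follows from a single weak-elimination step, but producing a solution in an arbitrary one-point extension requires a genuine pivoting argument together with a combinatorial termination proof, drawing on (a) to forbid cycling.
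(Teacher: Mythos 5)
Your decomposition has a gap at the highest level: the three equivalences you propose, (a*) $\Leftrightarrow$ (b), (a) $\Leftrightarrow$ (b*), and (a) $\Leftrightarrow$ (c), leave the five conditions split into two disconnected blocks $\{\text{(a), (b*), (c)}\}$ and $\{\text{(a*), (b)}\}$, so they do \emph{not} yield mutual equivalence. What is missing is a bridge such as (a) $\Leftrightarrow$ (a*). That equivalence is genuinely nontrivial -- it is part of Todd's theorem, which the paper cites rather than reproves -- and it does not fall out of applying your orthogonality argument to $\M^*$ or to the reflected dual $\reflection(\M^*)$: under those translations (a) goes to (a*) and (b*) to (b) simultaneously, so one only permutes statements within the two blocks without connecting them. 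The paper closes the circle by citing Todd for (a) $\Leftrightarrow$ (a*) $\Leftrightarrow$ (c) and then adding (a) $\Leftrightarrow$ (b*) and, analogously, (a*) $\Leftrightarrow$ (b) by exactly the orthogonality argument you use.

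On the pieces you do carry out: the orthogonality arguments for (a*) $\Leftrightarrow$ (b) and (a) $\Leftrightarrow$ (b*) are sound and essentially match the paper's (both share a small omission in the converse direction -- when building a t.s.p.\ witness $X$ from an s.p.\ cocircuit $D$, pairs with $D_e\neq0$ and $D_{\ole}=0$ are permitted by the s.p.\ condition and are not covered by either of your two rules; the forced choice is $X_{\ole}:=D_e$). Your uniqueness half of (a) $\Rightarrow$ (c) by weak circuit elimination at $q$ is correct. But the existence half, which you rightly flag, is a substantial theorem (essentially Todd's Lemke-type existence result for OMCPs) that you leave as a plan, and the (c) $\Rightarrow$ (a) direction merely sketches an extension in which both $C^+\cup\{q\}$ and $C^-\cup\{q\}$ support nonnegative complementary circuits; this requires actually verifying that such a single-element extension exists and must also address degenerate cases such as $C^+=\emptyset$, where the proposed second circuit would have to be supported on $\{q\}$ alone.
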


\begin{proof}
The equivalence of the conditions (a), (a*) and (c) was shown by
Todd~\cite{t-com-84}. Morris~\cite{Mor:Oriented} proved that (a)
implies~(b). We show the equivalence of (a) with~(b*). The
equivalence of (a*) with~(b) is proved analogously.

First we prove that (a) implies (b*). Since no circuit of~$\M$ is s.r.,
there is for every circuit~$C$ an element~$e$ such that $C_{e} \cdot
C_{\ole}={+}$. It follows that every t.s.r.\ sign vector~$Y$ is orthogonal
to every circuit, hence $Y$ is a covector.

For the opposite direction, suppose that there is a s.r.~circuit~$C$.
If so, then any t.s.r.~vector~$Y$ for which $Y^{+} \subseteq C^{+}$
and $Y^{-} \subseteq C^{-}$ is \emph{not} orthogonal to~$C$, which is
a contradiction with~(b*).
\end{proof}

The condition~(b) of this theorem has a translation for realization
matrices of P-ma\-troids, that is, for P-matrices:

\begin{cor}
A matrix $M\in\R^{n\times n}$ is a P-matrix if and only if for every
$\sigma\in\{-1,+1\}^n$ there exists a vector~$x\in\R^n$ such that
for $y=Mx$ and for each $i\in\{1,2,\dotsc,n\}$ we have
\begin{align*}
\sigma_i x_i &> 0,\\
\sigma_i y_i &> 0.
\end{align*}
\end{cor}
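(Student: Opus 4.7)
The plan is to derive the corollary directly from Theorem~\ref{thm:eqP} (specifically the equivalence of (a) and~(b)) combined with Proposition~\ref{realP}, by unpacking what condition~(b) says in the LCP-realizable setting.

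First, I would associate to $M$ its LCP-realizable oriented matroid $\M=(E_{2n},\V)$ as in Definition~\ref{def:realM}, using the canonical labelling $E_{2n}=\{1,\dotsc,2n\}$ with $\overline{i}=i+n$. By Proposition~\ref{realP}(i)--(ii), $M$ is a P-matrix if and only if $\M$ is a P-matroid, and by the equivalence (a)$\Leftrightarrow$(b) in Theorem~\ref{thm:eqP}, $\M$ is a P-matroid if and only if every t.s.p.\ sign vector on $E_{2n}$ belongs to~$\V$. So I only need to show that the corollary's condition is exactly an LCP-realizable restatement of the latter.

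Next I would observe that a t.s.p.\ sign vector $X$ has full support and satisfies $X_e=X_{\ole}$ for every $e\in S$, so such $X$ are in bijection with sign patterns $\sigma\in\{-1,+1\}^n$ via $X_i=X_{i+n}=\sigma_i$. Write $X^{\sigma}$ for the vector corresponding to~$\sigma$. By the definition of~$\V$, $X^{\sigma}\in\V$ means there is some $z\in\R^{2n}$ with $[I_n\ -M]z=0$ and $\sgn z=X^{\sigma}$; writing $z=(y,x)$ with $x,y\in\R^n$, this is equivalent to $y=Mx$ together with $\sgn x_i=\sigma_i$ and $\sgn y_i=\sigma_i$ for each~$i$, i.e.\ to $\sigma_i x_i>0$ and $\sigma_i y_i>0$ for all~$i$. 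Ranging over all~$\sigma$ gives the stated equivalence.

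I do not foresee a genuine obstacle: once the indexing convention of Definition~\ref{def:realM} is set up, the proof is just a direct translation of condition~(b) to the realization matrix. The only small subtlety worth double-checking is that the totality condition $\underline{X^{\sigma}}=E_{2n}$ corresponds on the algebraic side to the \emph{strict} inequalities $\sigma_ix_i>0$, $\sigma_iy_i>0$ (rather than weak inequalities), which is indeed what the corollary states.
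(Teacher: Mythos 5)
Your proof is correct and follows exactly the route the paper implicitly intends: the paper states the corollary without proof as ``a translation for realization matrices'' of condition~(b) in Theorem~\ref{thm:eqP}, and your argument---passing to the LCP-realizable matroid via Definition~\ref{def:realM} and Proposition~\ref{realP}, identifying t.s.p.\ sign vectors with $\sigma\in\{-1,+1\}^n$, and unpacking membership in $\V$---is precisely that translation carried out in detail. The one subtlety you flag (that full support forces the strict inequalities) is handled correctly.
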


Todd~\cite{t-com-84} also gives an oriented-matroid analogue of the
``positive principal minors'' condition. Stating it would require some
more explanations; later in this article we need a weaker property of
P-matroids, though, which corresponds to the fact that all principal
minors of a P-matrix are nonzero.

\begin{lem}[Todd~\cite{t-com-84}] \label{lem:basisP}
 For a P-matroid $\mathcal{M}$ every complementary subset $B \subseteq
 E_{2n}$ of cardinality $n$ is a basis.
\end{lem}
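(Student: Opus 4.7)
The plan is a short contradiction proof that uses only the defining property of P-matroids from Definition~\ref{def:P}, together with the standing hypothesis that $S$~is a basis of~$\M$ (so $\M$~has rank~$n$).

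The first step is to reduce the claim to the statement that no circuit of~$\M$ has its support contained in~$B$. Since every basis of an oriented matroid has the same cardinality, and $S$ is a basis of size~$n$, any set of size~$n$ that contains no circuit support is automatically inclusion-maximal among such sets: extending it to a maximal such set would yield a basis, necessarily of size~$n$, and therefore equal to the original set. So it suffices to rule out the existence of a circuit $C\in\V$ with $\underline{C}\subseteq B$.

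The main step is the observation that complementarity of~$B$ automatically turns any such ``bad'' circuit into a sign-reversing one. Suppose $C$ is a circuit with $\underline{C}\subseteq B$. Because $B\cap\overline{B}=\emptyset$, for every $e\in S$ at most one of the pair $e,\ole$ lies in~$B$, so at most one of $C_e,C_{\ole}$ is nonzero, and in particular $C_e\cdot C_{\ole}=0\le 0$ for every $e\in S$. Hence $C$ is sign-reversing, contradicting Definition~\ref{def:P}.

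I do not anticipate any real obstacle; the lemma is essentially a definitional unwinding, and the argument above should fit into a few lines of finished proof. The only non-trivial conceptual ingredient is the equicardinality of bases, which lets one pass from ``no circuit supported in~$B$'' to ``$B$~is a basis''.
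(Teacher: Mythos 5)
Your proof is correct. The key observation---that any circuit $C$ with $\underline{C}\subseteq B$ for a complementary set $B$ must satisfy $C_e\cdot C_{\ole}=0$ for every $e\in S$ (since $B$ contains at most one of $e,\ole$), and is therefore sign-reversing---is exactly the right reduction, and the passage from ``$B$ contains no circuit support'' to ``$B$ is a basis'' via equicardinality of bases, using the standing assumption that $S$ is a basis of size $n$, is sound. The paper states this lemma without proof, simply citing Todd~\cite{t-com-84}, so there is no in-text argument to compare against; the argument you give is the natural short one, and the appeal to the rank assumption is genuinely needed (without it, e.g.\ the matroid with $\V=\{0\}$ would vacuously satisfy the ``no sign-reversing circuit'' condition while having no complementary $n$-set as a basis).
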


\begin{remark}
In addition, every such complementary $B$ is also a cobasis, i.e.,
it is a basis of the dual matroid $\mathcal{M}^{*}$.
\end{remark}

Next we consider principal pivot transforms
(see~\cite{tsatsomeros2,Tuc:A-combinatorial}) of P-matrices.  The fact
that every principal pivot transform of a P-matrix is again a
P-matrix~\cite{Tuc:Principal} is well-known. The proof is not very
difficult but it uses involved properties of the Schur complement. In
the setting of oriented matroids the equivalent is much simpler. First
let us define principal pivot transforms of oriented matroids.

\begin{defn}
Let $F\subseteq E_{2n}$ be a complementary set. The \emph{principal
pivot transform} of a sign vector~$X$ with respect to~$F$ is the sign
vector~$\tilde C$ given by
\[\tilde C_e = \begin{cases}
	C_e&\text{if }e\notin F,\\
	C_{\ole}&\text{if }e\in F.
\end{cases}\]
The \emph{principal pivot transform} of a matroid~$\M$ with respect
to~$F$ is the matroid whose circuits (vectors) are the principal pivot
transforms of the circuits (vectors) of~$\M$.
\end{defn}

It is easy to see that, in the LCP-realizable case, principal pivot transforms
of a matroid correspond to matroids realized by corresponding principal
pivot transforms of the realization matrix. Thus the following proposition
implies the analogous theorem for P-matrices.

\begin{prop}
Every principal pivot transform of a P-matroid is a P-matroid.
\end{prop}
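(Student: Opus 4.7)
My plan is to show that the principal pivot transform sends sign-reversing circuits to sign-reversing circuits and vice versa; the proposition then follows immediately from the definition of a P-matroid. Let $F \subseteq E_{2n}$ be the complementary set with respect to which the pivot is taken, and let $X$ be any sign vector with principal pivot transform $\tilde X$. The key observation is that on each complementary pair $\{e,\ole\}$ the transform either acts trivially (when neither of $e,\ole$ lies in $F$) or swaps the two coordinates $X_e$ and $X_{\ole}$ (when exactly one of them lies in $F$; both cannot lie in $F$ because $F$ is complementary). In either case
\[\tilde X_e \cdot \tilde X_{\ole} \;=\; X_e \cdot X_{\ole},\]
since the product is symmetric in its two factors.

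Since the sign-reversing condition requires $X_e \cdot X_{\ole} \le 0$ for every $e \in S$, and these products coincide for $X$ and $\tilde X$, the sign vector $X$ is s.r.\ if and only if $\tilde X$ is. By the stated definition, the circuits of the transformed matroid $\tilde{\M}$ are precisely the principal pivot transforms of the circuits of $\M$, so $\tilde{\M}$ contains an s.r.\ circuit if and only if $\M$ does. Since the P-matroid $\M$ has none, neither does $\tilde{\M}$, and therefore $\tilde{\M}$ is a P-matroid.

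I do not foresee any substantive obstacle in executing this plan. The entire argument reduces to the symmetry of the product $X_e\cdot X_{\ole}$ in its two arguments, which makes the sign-reversing condition invariant under the coordinate swap implemented by the principal pivot transform. The only point that requires any care is parsing the definition of the pivot transform on pairs: when $e\in F$, the prescription $\tilde X_e = X_{\ole}$ together with complementarity of $F$ means that $\ole\notin F$, and one reads off $\tilde X_{\ole}$ via the paired element, yielding a genuine swap of the two coordinates. Once this is observed, the proof is essentially one line of computation.
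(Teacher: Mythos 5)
Your proof is correct and takes essentially the same route as the paper, whose entire proof is the single sentence that the principal pivot transform of a circuit $C$ is sign-reversing if and only if $C$ is; you have simply spelled out why: the transform permutes each complementary pair $\{e,\ole\}$ among itself, so the product $C_e\cdot C_{\ole}$ is unchanged. One small remark on the parsing step you flag: the paper's displayed definition, read completely literally with a complementary $F$ (so $e\in F$ forces $\ole\notin F$), would give $\tilde C_e = C_{\ole}$ \emph{and} $\tilde C_{\ole}=C_{\ole}$, which is not a swap and would even break the proposition; the intended (and standard) reading, which you correctly adopt, is that coordinates are swapped within each pair $\{e,\ole\}$ with $e\in F$, equivalently that the case ``$e\in F$'' should be ``$e\in F\cup\olF$.''
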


\begin{proof}
The principal pivot transform of a circuit~$C$ is \sr\ if and only if $C$~is \sr.
\end{proof}

\section{Z-matroids}

The second class of matrices we examine are Z-matrices; the corresponding
matroid generalizations are Z-matroids. Recall that a \emph{Z-matrix}
is a matrix whose every off-diagonal element is non-positive.
The definition of Z-matroids was first proposed in~\cite{LemLut:Classes}.

\begin{defn} 
 A matroid $\mathcal{M}$ on $E_{2n}$ is a \emph{Z-matroid} if for every
 circuit $C$ of~$\M$ we have:
 \begin{equation}
  \label{defZ}
  \begin{split}
  \text{If } C_{T} \geq 0 &\text{, then} \\
  & C_{\ole}={+} \text{ for all } e \in S \text{ with } C_{e}=+ .
  \end{split}
 \end{equation}
\end{defn}

\begin{remark}
In the definition of Z-matroid we could replace all occurrences of
the word ``circuit'' with the word ``vector''. Indeed, in a conformal
decomposition of a vector violating~\eqref{defZ}, there would always be
a circuit violating~\eqref{defZ} as well.
\end{remark}

It makes perfect sense to define Z-matroids in this way. We show that
in LCP-realizable cases, any realization matrix $M$ is a Z-matrix.

\begin{prop} \label{prop:realZ}
~
\begin{compactenum}[\rm(i)]
\item If $\M$ is LCP-realizable and there exists a realization matrix~$M$
that is a Z-matrix, then $\M$~is a Z-matroid.
\item If $\mathcal{M}$ is an LCP-realizable Z-matroid, then
every realization matrix~$M$ is a Z-matrix.
\end{compactenum}
\end{prop}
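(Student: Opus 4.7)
The plan is to exploit the bijection, provided by LCP-realizability, between vectors of $\M$ and sign patterns of real pairs $(w,z)\in\R^n\times\R^n$ satisfying $w=Mz$. I adopt the canonical indexing $E_{2n}=\{1,\dotsc,2n\}$ with $S=\{1,\dotsc,n\}$ and $\overline{k}=k+n$, so that every sign vector $X$ of $\M$ arises as $\sgn(w,z)$ with $X_k=\sgn w_k$ and $X_{k+n}=\sgn z_k$ for $k\in S$.

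For part~(i) I would argue directly. Let $C$ be a circuit of $\M$ with $C_T\ge 0$ and lift it to some $(w,z)$ with $\sgn(w,z)=C$, $w=Mz$, and $z\ge 0$. Pick any $k\in S$ with $C_k=+$, so $w_k>0$, and expand
\[
w_k \;=\; M_{kk}z_k + \sum_{j\ne k} M_{kj}z_j.
\]
Since $M$ is a Z-matrix, $M_{kj}\le 0$ for $j\ne k$, and $z_j\ge 0$, so the sum is non-positive. Therefore $M_{kk}z_k\ge w_k>0$, and combined with $z_k\ge 0$ this forces $z_k>0$; hence $C_{k+n}=+$, as required.

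For part~(ii) I would proceed by contraposition. Suppose $M$ is not a Z-matrix and choose distinct indices $i,j$ with $M_{ij}>0$. Set $z=e_j$ (the $j$th standard basis vector in $\R^n$) and $w=Me_j$. Then $(w,z)\in\ker\begin{bmatrix}I_n & -M\end{bmatrix}$, so $X:=\sgn(w,z)$ is a vector of $\M$; clearly $X_T\ge 0$, while $X_i=\sgn M_{ij}=+$ and $X_{i+n}=\sgn(e_j)_i=0$ since $i\ne j$. Thus $X$ violates the defining Z-matroid condition at the vector level; by the remark following the definition of Z-matroid, some circuit in a conformal decomposition of $X$ inherits the violation, contradicting the hypothesis that $\M$ is a Z-matroid.

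The whole argument is essentially an unpacking of definitions; the only mildly delicate point is in~(i), where a Z-matrix imposes no a~priori sign constraint on the diagonal $M_{kk}$. No such constraint is needed: the strict inequality $M_{kk}z_k>0$ together with $z_k\ge 0$ already rules out $z_k=0$ and gives $z_k>0$.
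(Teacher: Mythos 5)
Your proof is correct and takes essentially the same approach as the paper's: both parts reduce to reading off a single row of the identity $w = Mz$, with part~(ii) using the same test vector $z = e_j$ and the remark that a violating vector yields a violating circuit. The only difference is cosmetic: in part~(i) the paper phrases the same row-wise sign argument as a contradiction from a non-existent linear combination, whereas you give the direct version, isolating $M_{kk}z_k$ to conclude $z_k>0$.
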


\begin{proof}
We fix $E_{2n}=\{1,\dotsc,2n\}$ with $S=\{1,\dotsc,n\}$ where the complement of an $i \in S$ is the element $i+n$.

(i) Let $e_{i}$ denote the $i$th unit vector and $m_j$ the $j$th column
of the matrix~$M$. The sign pattern of the Z-matrix
$M$ implies that there is no linear combination of the form
\begin{align*}
 e_{i} + \sum_{\substack{j=1\\j\neq i}}^{n} x_{j}e_{j} - \sum_{j=n+1}^{2n} x_{j}m_{j}=0,
\end{align*}
where $x_{j} \geq 0$ for every   $j > n$  and  $x_{i+n}=0 $, because
the $i$th row of the left-hand side is strictly positive. Hence there is
no vector $X \in\V$ for which $X_{T} \geq 0$, $X_{i}=+$ but $X_{i+n}=0$
for some $i \in S$.

(ii) Proof by contradiction. Assume that for an LCP-realizable Z-matroid
$\mathcal{M}$ (where $S$ is a basis), there is a realization matrix~$M$
that is not a Z-matrix, that is, there is an off-diagonal $m_{ij}>0$.
If so, there is a vector $X$ with $X_{j+n}=+$ and $X_{T \backslash
\left\lbrace j+n \right\rbrace }=0$, but $X_{i}=+$. This $X$
violates the Z-matroid property~\eqref{defZ} since also $X_{i+n}=0$,
a contradiction. Thus no positive $m_{ij}$ can exist and $M$ has to be
a Z-matrix.
\end{proof}

Another option is to characterize a Z-matroid with respect to the dual
matroid $\mathcal{M}^{*}$.

\begin{prop} \label{defZd}
An oriented matroid $\mathcal{M}$ on~$E_{2n}$ is a Z-matroid if and only
if for every cocircuit $D$ of~$\M$ we have:
 \begin{equation}
  \label{defZdual}
  \begin{split}
  \text{If } D_{S} \leq 0 &\text{, then} \\
  & D_{\ole}={-} \text{ for all } e \in T \text{ with } D_{e}=+ .
  \end{split}
 \end{equation}
\end{prop}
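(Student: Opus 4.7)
The plan is to prove both implications using orthogonality of circuits and cocircuits together with the fundamental (co)circuits that are available because $S$ is a basis of $\M$ (so $T$ is a basis of $\M^\ast$). The key observation is that a violating circuit or cocircuit, combined with a suitable fundamental object, produces a pair $(C,D)$ with $C\perp D$ whose sign products have a single $+$ and therefore need at least one $-$, and that forced $-$ turns out to be precisely the violation on the opposite side.

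For the forward direction, assume the circuit condition holds and suppose for contradiction that there is a cocircuit $D$ with $D_S\le 0$, some $e_0\in T$ with $D_{e_0}={+}$ and $D_{\ole_0}=0$. Look at the fundamental circuit $C=C(S,e_0)$: its support lies in $S\cup\{e_0\}$ and $C_{e_0}={+}$. Orthogonality $C\perp D$ yields $C_{e_0}\cdot D_{e_0}={+}$, so there must be some other index where the product is $-$; since $C$ vanishes off $S\cup\{e_0\}$, this index lies in $S$. At such an $f\in S$ we have $D_f\ne 0$ and $D_f\le 0$, so $D_f={-}$, and thus $C_f={+}$. Because $D_{\ole_0}=0$ and $D_f={-}$, necessarily $f\ne\ole_0$, i.e.\ $\olf\ne e_0$, and since $\underline{C}\subseteq S\cup\{e_0\}$ this gives $C_{\olf}=0$. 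Combined with $C_T\ge 0$ (the only nonzero $T$-entry of $C$ is $C_{e_0}={+}$), the circuit $C$ violates~\eqref{defZ}, a contradiction.

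The reverse direction is entirely dual. Assume the cocircuit condition and suppose for contradiction that a circuit $C$ violates~\eqref{defZ}, i.e.\ $C_T\ge 0$, $C_e={+}$ for some $e\in S$, and $C_{\ole}=0$. Take the fundamental cocircuit $D(T,e)$ whose support lies in $T\cup\{e\}$ with value $+$ at $e$, and consider its opposite $D:=-D(T,e)$, which is also a cocircuit, satisfies $D_e={-}$, $D_{S\setminus\{e\}}=0$, so $D_S\le 0$. Orthogonality $C\perp D$ gives $C_e\cdot D_e={-}$, so some other product must be $+$; since $D$ vanishes on $S\setminus\{e\}$, this index $f$ lies in $T$, and at $f$ we have $C_f={+}$ (forced by $C_T\ge 0$) and $D_f={+}$. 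Since $C_{\ole}=0$ but $C_f={+}$, $f\ne\ole$, i.e.\ $\olf\ne e$, so $\olf\in S\setminus\{e\}$ and therefore $D_{\olf}=0$. Thus $D$ violates~\eqref{defZdual}, a contradiction.

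I do not foresee a serious obstacle: the whole argument is a clean application of orthogonality plus the elementary fact that fundamental (co)circuits with respect to $S$ and $T$ exist and have tightly controlled supports. The only subtlety is the careful bookkeeping that the special index one finds through orthogonality is not the ``missing'' complementary element $\ole$ (respectively $\ole_0$) — which is ensured precisely because that element is forced to be zero in the hypothesized violating object.
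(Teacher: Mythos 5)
Your proof is correct, and it uses the same key ingredients as the paper: the fundamental circuit $C(S,e_0)$ for the forward direction, the fundamental cocircuit $D(T,e)$ (or its negation) for the reverse, and orthogonality between circuits and cocircuits. The only stylistic difference is that the paper uses the assumed Z-property (via Lemma~\ref{lem:fundc}) to show the pair is \emph{not} orthogonal, arriving at a contradiction with general theory, whereas you invoke orthogonality to locate the index where the violation must occur; these are logically equivalent formulations of the same argument.
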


\begin{proof}
First we prove the ``only if'' direction. Suppose that there is a
cocircuit~$D$ which does not satisfy~\eqref{defZdual}.  Accordingly $D_{S}
\leq 0$ and there is $e \in T$ such that $D_{e}=+$, but $D_{\ole}=0$. But
note that the fundamental circuit $C:=C(S,e)$ and $D$ are not orthogonal
because the Z-matroid property~\eqref{defZ} implies that $C_{S \backslash
\{e\}} \leq 0$. Hence no such $D$ can exist.

For the ``if'' direction suppose that there is a circuit~$C$ for which
$C_{T} \geq 0$ and $C_{e}=+$, but $C_{\ole}=0$ for some $e \in S$. This
circuit~$C$ and the fundamental cocircuit $D:=D(T,e)$ are not orthogonal
since by assumption \eqref{defZdual} holds for $D$ and of course $-D$,
hence $D_{T \backslash \{\ole \}} \geq 0$.
\end{proof}

In the proofs in the following section we often make use of fundamental circuits.
Here we observe that all fundamental circuits with respect to the
basis~$S$ follow the same sign pattern.

\begin{lem}
\label{lem:fundc}
Let $\M$ be a Z-matroid.  Let $e\in T$ and let $C=C(S,e)$ be the
fundamental circuit of~$e$ with respect to the basis~$S$. Then
\begin{align*}
C_e &= +, \\
C_{T \backslash \left\lbrace e \right\rbrace} &= 0, \\
C_{S \backslash \left\lbrace \ole \right\rbrace } &\leq 0.
\end{align*}
\end{lem}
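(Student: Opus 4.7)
The first two equalities are essentially by the definition of the fundamental circuit. By definition, $C(S,e)$ is the unique circuit with $\underline{C}\subseteq S\cup\{e\}$ and $C_e=+$, which immediately gives $C_e={+}$. Moreover, since $\underline{C}\subseteq S\cup\{e\}$ and $T\cap(S\cup\{e\})=\{e\}$ (because $S\cup T=E_{2n}$ is a partition), the coordinates of $C$ on $T\setminus\{e\}$ must all vanish, giving $C_{T\setminus\{e\}}=0$.

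The substantive content is the third statement, and the plan is a direct application of the defining property~\eqref{defZ} of a Z-matroid. Having established $C_e=+$ and $C_{T\setminus\{e\}}=0$, I observe that $C_T\ge0$. Thus by~\eqref{defZ}, for every $f\in S$ with $C_f=+$ we must have $C_{\olf}={+}$.

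To conclude, I argue by contradiction: suppose some $f\in S\setminus\{\ole\}$ satisfies $C_f={+}$. The involution $g\mapsto\olg$ restricts to a bijection between $S$ and $T$, so $\olf\in T$; and since $f\neq\ole$, applying the involution yields $\olf\neq\overline{\ole}=e$, hence $\olf\in T\setminus\{e\}$. By the Z-matroid property just invoked, $C_{\olf}={+}$, contradicting $C_{T\setminus\{e\}}=0$. Therefore no such $f$ exists, and $C_f\le0$ for every $f\in S\setminus\{\ole\}$, i.e., $C_{S\setminus\{\ole\}}\le0$.

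There is no real obstacle here beyond bookkeeping the complementarity map carefully; the argument is a one-line application of \eqref{defZ} to the fundamental circuit, combined with the fact that the involution $e\mapsto\ole$ swaps $S$ and $T$.
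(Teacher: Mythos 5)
Your proof is correct and follows the same route as the paper's: the first two equalities come from the definition of a fundamental circuit, giving $C_T\ge0$, and the third is then a direct consequence of the Z-matroid property~\eqref{defZ}. You spell out the contradiction that the paper leaves implicit, but the argument is identical in substance.
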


\begin{proof}
The first and the second equality follow directly from the definition of
a fundamental circuit. Thus $C_{T} \geq 0$. Hence the third property
follows from the Z-matroid property~\eqref{defZ}.
\end{proof}

\section{K-matroids}

\begin{defn}
 A matroid $\mathcal{M}$ on $E_{2n}$ is a \emph{K-matroid} if it is a P-matroid and a Z-matroid.
\end{defn}

Combining Proposition~\ref{realP} and Proposition~\ref{prop:realZ}
we immediately get:

\begin{prop} \label{prop:realK}
~
\begin{compactenum}[\rm(i)]
\item If $\M$ is LCP-realizable and there exists a realization matrix~$M$
that is a K-matrix, then $\M$~is a K-matroid.
\item If $\mathcal{M}$ is an LCP-realizable K-matroid, then
every realization matrix~$M$ is a K-matrix.
\end{compactenum}
\end{prop}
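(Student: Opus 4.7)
The proposition is essentially a direct corollary, obtained by unpacking definitions and invoking the two preceding realizability results. The plan is to split each implication along the conjunctive definitions of K-matrix (= P-matrix and Z-matrix) and K-matroid (= P-matroid and Z-matroid), and then apply Proposition~\ref{realP} and Proposition~\ref{prop:realZ} componentwise. No new oriented-matroid reasoning is required.

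For part (i), I would argue as follows. Assume $\M$ is LCP-realizable with a realization matrix $M$ that is a K-matrix. Then $M$ is in particular a P-matrix, so Proposition~\ref{realP}(i) yields that $\M$ is a P-matroid. Likewise, $M$ is a Z-matrix, so Proposition~\ref{prop:realZ}(i) yields that $\M$ is a Z-matroid. By the definition of K-matroid, $\M$ is a K-matroid.

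For part (ii), the argument is symmetric. Assume $\M$ is an LCP-realizable K-matroid, and let $M$ be any realization matrix. Since $\M$ is a P-matroid, Proposition~\ref{realP}(ii) implies that $M$ is a P-matrix, and since $\M$ is a Z-matroid, Proposition~\ref{prop:realZ}(ii) implies that $M$ is a Z-matrix. Hence $M$ is a K-matrix.

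There is no real obstacle here, since both parts reduce mechanically to two already proved statements; the only thing one needs to double-check is that the splitting of ``K'' into ``P and Z'' is strictly conjunctive on both the matrix side and the matroid side, which is immediate from the definitions given earlier in the paper. Consequently the proof can be written in a single compact sentence, as the authors suggest with the phrase ``we immediately get''.
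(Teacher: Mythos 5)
Your proof is correct and matches the paper's exactly: the paper's entire justification is the phrase ``Combining Proposition~\ref{realP} and Proposition~\ref{prop:realZ} we immediately get,'' which is precisely the componentwise splitting you spell out.
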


An oriented matroid minor $\mathcal{M} \backslash F \slash \olF $ where
$F$~is a complementary subset of~$E_{2n}$ is called a \emph{principal minor}
of $\mathcal{M}$.

\begin{lem} \label{lem:K}
Let $\mathcal{M}$ be a K-matroid. Then every principal minor
$\mathcal{M} \backslash F \slash \olF $ is a K-matroid.
\end{lem}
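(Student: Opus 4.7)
My plan is to induct on $|F|$, reducing to the case $F=\{f\}$ of a single complementary pair via the fact that deletions and contractions commute element-wise. In the one-pair step I must show that $\M':=\M\setminus\{f\}/\{\bar f\}$ is both a P-matroid and a Z-matroid, treating the two subcases $f\in S$ and $f\in T$ slightly differently.

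For the P-matroid part I use the equivalent formulation ``no nonzero sign-reversing vector'' noted after Definition~\ref{def:P}. A hypothetical nonzero s.r.\ vector $\tilde X$ of $\M'$ lifts to a vector $X\in\V$ with $X_f=0$ and $X$ equal to $\tilde X$ off $\{f,\bar f\}$. On every complementary pair meeting $\{f,\bar f\}$ one coordinate of $X$ is forced to $0$, so the product vanishes, while the remaining pairs inherit the s.r.\ property from $\tilde X$. Thus $X$ is a nonzero s.r.\ vector of $\M$, contradicting $\M$'s being a P-matroid.

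For the Z-matroid part, take a circuit $\tilde C$ of $\M'$ with $\tilde C_{T'}\ge 0$ and $\tilde C_e={+}$ for some $e\in S'$, and lift it to a circuit $C$ of $\M$ with $C_f=0$ and $C=\tilde C$ off $\{f,\bar f\}$; the goal is $\tilde C_{\bar e}={+}$. In the easy subcase $f\in T$, every $T$-entry of $C$ is either in $T'$ (and hence $\ge 0$) or equal to $C_f=0$, so $C_T\ge 0$, and the Z-matroid property of $\M$ applied to $C$ directly yields $\tilde C_{\bar e}=C_{\bar e}={+}$.

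The delicate subcase is $f\in S$, $\bar f\in T$, where $C_{\bar f}$ is a priori free. I first establish that the fundamental circuit $D:=C(S,\bar f)$ satisfies $D_f={+}$: by Lemma~\ref{lem:basisP} the complementary $n$-set $(S\setminus\{f\})\cup\{\bar f\}$ is a basis, so $D_f\neq 0$; Lemma~\ref{lem:fundc} gives $D_{\bar f}={+}$, $D_{T\setminus\{\bar f\}}=0$ and $D_{S\setminus\{f\}}\le 0$, so $D_f={-}$ would make $D$ itself sign-reversing, contradicting the P-matroid property. Then, if the lift has $C_{\bar f}={-}$, applying strong circuit elimination to $D$ and $C$ at $\bar f\in D^+\cap C^-$ while preserving the coordinate $f\in D^+\setminus C^-$ yields a circuit $Z$ of $\M$ with $Z_{\bar f}=0$, $Z_f={+}$, and $Z_{T'}\ge 0$ (since $D_{T'}=0$ and $C_{T'}\ge 0$), so $Z_T\ge 0$; the Z-matroid property of $\M$ applied to $Z$ forces $Z_{\bar f}={+}$, contradicting $Z_{\bar f}=0$. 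Hence $C_{\bar f}\ge 0$, so $C_T\ge 0$, and the Z-matroid property of $\M$ closes the argument. The main obstacle is precisely this sign analysis: pinning down $D_f={+}$ requires both the P- and Z-parts of the K-matroid hypothesis, and strong circuit elimination is what turns that sign information into the contradiction that rules out the problematic case.
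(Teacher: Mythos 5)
Your proof is correct and follows essentially the same strategy as the paper's: reduce to a single complementary pair, split on whether the deleted element lies in $S$ or $T$, and in the hard case use strong circuit elimination of the lifted circuit against the fundamental circuit $D=C(S,\bar f)$ to manufacture a contradiction with the Z-matroid property. The one place you go beyond the paper is in explicitly pinning down $D_f={+}$ via Lemma~\ref{lem:basisP} and the P-matroid property before applying strong elimination; the paper's proof tacitly uses this sign when it asserts $C'_e={+}$, so your version is actually a bit more careful, and likewise your direct lifting argument for the P-matroid part is a reasonable substitute for the paper's citation of Todd.
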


\begin{proof}
It was shown by Todd~\cite{t-com-84} that every principal minor of a
P-matroid is a P-matroid. Thus, it is enough to show that such a minor
is a Z-matroid, and for this, since deletions and contractions can be 
carried out element by element in any order, it suffices to consider
the case that $F$~is a singleton.

First, we prove that if $e \in T$, then $\mathcal{M} \backslash
\left\lbrace e\right\rbrace  \slash \left\lbrace \ole \right\rbrace$
is a Z-matroid. Such a principal minor consists of all circuits $C
\backslash \left\lbrace e, \ole \right\rbrace$, where $C$~is a circuit
of~$\M$ and $C_{e}=0$. Since every circuit of~$\mathcal{M}$ satisfies the
Z-matroid characterization $\eqref{defZ}$, such a circuit $C \backslash
\left\lbrace e, \ole \right\rbrace$ trivially satisfies it too.

Secondly, let $e \in S$. Here we apply a case distinction. If
$C_{\ole}=+$, then $(C \backslash \left\lbrace e, \ole \right\rbrace)_{T}
\geq 0$ if and only if $C_{T} \geq 0$. As a direct consequence, $C
\backslash \left\lbrace e, \ole \right\rbrace$ satisfies~\eqref{defZ}
because $C$ does. If $C_{\ole}=-$, we can show that there is another
element $f \in T$  such that $C_{f}=-$ too, that is, $(C \backslash
\left\lbrace e, \ole \right\rbrace)_{T} \not \geq 0$ and thus the
Z-matroid property $\eqref{defZ}$ is obviously satisfied. Assume for the
sake of contradiction that there is no such $f \in T$. The strong circuit
elimination~\ciref{c4p} of $C$ and the fundamental circuit $C(S,\ole)$
at $\ole$ then yields a circuit $C'$ with $C'_{T} \geq 0$, $C'_{\ole}=0$
and $C'_{e}=+$. Since $e \in S$, such a $C'$ would violate the Z-matroid
definition, a contradiction.
\end{proof}

Our main result, the combinatorial generalization of the Fiedler--Pt\'ak
Theorem~\ref{thm:FP} is the following.

\begin{thm} \label{thm:eqK}
For a Z-matroid $\mathcal{M}$ (with vectors~$\V$, covectors~$\V^{*}$,
circuits~$\C$ and cocircuits~$\D$), the following statements are
equivalent:

\begin{tabular}{l c l}
{\rm(a)}  $\exists X \in \mathcal{V}: X_{T} \geq 0 \text{ and } X_{S} > 0$; & &
{\rm(a*)} $\exists Y \in \mathcal{V}^{*}: Y_{S} \leq 0 \text{ and } Y_{T} > 0$;\\
{\rm(b)}  $\exists X \in \mathcal{V}: X > 0$; & &
{\rm(b*)} $\exists Y \in \mathcal{V}^{*}: Y_{S} < 0 \text{ and } Y_{T} > 0$;\\
{\rm(c)}  $\forall C \in \mathcal{C}$: $C_{S} \geq 0 \implies C_{T} \geq 0$; & &
{\rm(c*)} $\forall D \in \mathcal{D}$: $D_{T} \geq 0 \implies D_{S} \leq 0$; \\
{\rm(d)}  there is no s.r.\ circuit $C \in \mathcal{C}$ &&
{\rm(d*)} there is no s.p.\ cocircuit $D \in \mathcal{D}$. \\
\phantom{(d)} (that is, $\M$ is a P-matroid);
\end{tabular} 
\end{thm}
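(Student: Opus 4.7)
The plan is to organize the proof as two parallel chains of implications, one for (a)--(d) and one for (a*)--(d*), bridged by Theorem~\ref{thm:eqP}, which directly yields (d) $\Leftrightarrow$ (d*). For the unstarred chain I would go around the cycle (d) $\Rightarrow$ (b) $\Rightarrow$ (a) $\Rightarrow$ (c) $\Rightarrow$ (d); the starred chain is proved by entirely analogous arguments, with the Z-matroid property replaced by its cocircuit form from Proposition~\ref{defZd}.

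The easy directions go first. (d) $\Rightarrow$ (b) is immediate from Theorem~\ref{thm:eqP}(b), since the all-$+$ sign vector is totally sign-preserving and therefore a vector of any P-matroid. (b) $\Rightarrow$ (a) is trivial. For (a) $\Rightarrow$ (b), given $X \in \V$ with $X_S > 0$ and $X_T \geq 0$, I would successively replace $X$ by $X \circ C(S,f)$ for each $f \in T$ with $X_f = 0$: by Lemma~\ref{lem:fundc} the fundamental circuit $C(S,f)$ equals $+$ at $f$, vanishes elsewhere on $T$, and is non-positive on $S \setminus \{\olf\}$, so the composition remains in $\V$ by~\veref{v3}, retains $X_S > 0$, and raises $X_f$ to $+$ without disturbing other nonzero entries.

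The two combinatorial cores are (c) $\Rightarrow$ (d) and (a) $\Rightarrow$ (c). For (c) $\Rightarrow$ (d), I first observe that applying (c) to both $C$ and $-C$ forces $C_S \neq 0$ for every circuit. Given a hypothetical sign-reversing circuit $C$, if $C_S \geq 0$ and nonzero then (c) gives $C_T \geq 0$, and the Z-matroid property~\eqref{defZ} then forces $C_{\ole} = +$ for any $e \in S$ with $C_e = +$, contradicting $C_e \cdot C_{\ole} \leq 0$; the case $C_S \leq 0$ is symmetric. In the remaining mixed case where $C_S$ contains both $+$ and $-$, strong circuit elimination~\ciref{c4p} with a fundamental circuit $C(S,f)$ (which under (c) must satisfy $C(S,f)_{\olf} = +$, as seen by applying (c) to $-C(S,f)$) produces a sign-reversing circuit of strictly smaller sign discrepancy on $S$, and induction yields the contradiction. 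For (a) $\Rightarrow$ (c), given $X > 0$ witnessing (a) and a hypothetical circuit $C$ with $C_S \geq 0$ but $C_f = -$ for some $f \in T$, I would apply the strong vector elimination~\veref{v4} to $X$ and $C$ at $f$: the strong clause forces $Z \in \V$ with $Z_S > 0$, $Z_f = 0$, $Z^+ \supseteq C^+ \cup C^0$, and $Z^- \subseteq C^-$. Iterating at the remaining negative $T$-entries of $C$ yields $Z^* \in \V$ that is strictly positive outside $C^-$ and vanishes on $C^-$; decomposing $Z^* \circ (-C)$ conformally via~\ciref{c6} then extracts a circuit that, combined with the minimality of $C$ and the Z-matroid property, produces the contradiction.

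The main obstacle I expect is the precise combinatorial bookkeeping in (a) $\Rightarrow$ (c), where one must track sign patterns through iterated strong vector elimination and use the Z-matroid structure at the final step to extract the contradicting circuit. The starred chain (a*) $\Leftrightarrow$ (b*) $\Leftrightarrow$ (c*) $\Leftrightarrow$ (d*) is proved by the same arguments dualized, using cocircuit elimination, the dual Z-matroid characterization from Proposition~\ref{defZd}, and the analogue of Lemma~\ref{lem:fundc} for fundamental cocircuits.
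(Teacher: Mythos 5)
Your high-level plan---the cycle (d)\,$\Rightarrow$\,(b)\,$\Rightarrow$\,(a)\,$\Rightarrow$\,(c)\,$\Rightarrow$\,(d), bridged to the starred chain by Theorem~\ref{thm:eqP} and then dualized---differs from the paper, which proves (a)\,$\Rightarrow$\,(b)\,$\Rightarrow$\,(c)\,$\Rightarrow$\,(d)\,$\Rightarrow$\,(a*) for Z-matroids and closes the cycle with the reflection operation $\reflection$. The easy steps you give are fine; note, though, that your (a)\,$\Rightarrow$\,(b) can be done in one line: since $X_T\geq 0$ and $X_S>0$, the Z-matroid property (applied to vectors) forces $X_{\ole}={+}$ for every $e\in S$, so $X_T>0$. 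The two combinatorial cores, however, both have genuine gaps.

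In (a)\,$\Rightarrow$\,(c) (you in fact use $X>0$, i.e., condition~(b)), the claim that iterated elimination yields $Z^*$ that ``vanishes on $C^-$'' is false. The support clause of~\veref{v4}, with $X$ all-$+$, puts each previously zeroed element back into the support of the next vector (it lies in $\ulX\setminus\underline{Z^{i-1}}$), so $Z^*$ is $0$ only at the \emph{last} eliminated element $g\in T$ and is $+$ everywhere else. Consequently $Z^*\circ(-C)$ is just the all-$+$ vector and the conformal decomposition extracts nothing. The correct finish is shorter and is exactly what the paper does: $Z^*_T\geq 0$, $Z^*_{\overline{g}}={+}$ with $\overline{g}\in S$, yet $Z^*_g=0$, contradicting the Z-matroid property. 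In (c)\,$\Rightarrow$\,(d), mixed case, the assertion that strong circuit elimination of a sign-reversing $C$ with $C(S,f)$ yields another sign-reversing circuit is unjustified and can fail: by Lemma~\ref{lem:fundc}, $C(S,f)$ may be $-$ at some $e\in S\setminus\{\olf\}$, and if also $C_e\geq 0$ and $C_{\ole}=-$, the eliminated circuit $Z$ can have $Z_e=Z_{\ole}=-$, which is sign-\emph{preserving} at~$e$; so your induction has no valid step. The paper avoids this by never requiring the intermediate circuits to be sign-reversing: it performs all eliminations and then argues directly about the terminal circuit $C^k$ (tracking that $C$ and all fundamental circuits used are $\leq 0$ on the relevant parts of $S$), concluding $C^k_T\geq 0$, $C^k_S\leq 0$, and $C^k_e={+}$ for some $e\in T$, so that $-C^k$ violates~(c).
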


In order to use duality in the proof of this theorem, let us
first define the \emph{reflection} of a matroid $\M=(E_{2n},\V)$
to be the matroid $\reflection(\M)=(E_{2n},\reflection(\V))$, where
$\reflection(\V)=\{\reflection (X): X\in\V\}$ with
\[
\bigl(\reflection (X)\bigr)_e = 
\begin{cases}
X_{\ole} & \text{if $e\in S$},\\
-X_{\ole} & \text{if $e\in T$}.
\end{cases}
\]
Observe that $\reflection\bigl(\reflection(\M)\bigr)=\M$ because
of~\veref{v2}, and that $\reflection(\M^\ast)=\reflection(\M)^\ast$;
thus
\begin{equation}
\label{eq:reflstar}
\reflection\bigl(\reflection(\M^\ast)^\ast\bigr)=\M.
\end{equation}

\begin{proof}[Proof of Theorem~\ref{thm:eqK}]~

\begin{description}
\item{(a)${}\implies{}$(b):}
  Let $X$ be as in (a). Since $X_{T} \geq 0$, the Z-matroid
  property~\eqref{defZ} implies that if $X_{e}=+$ for an $e \in S$,
  then $X_{\ole}=+$.  Thus $X_{T} > 0$.

\item{(b)${}\implies{}$(c):}
  Let $X$ be the all-plus vector as in~(b).  Suppose that there is
  a circuit $C \in \mathcal{C}$ not satisfying~(c), that is, $C_{S}
  \geq 0$ but $C_{e}=-$ for some element~$e$ in~$T$. Starting with
  $Y^{0}=C$, we apply a sequence of vector eliminations~\veref{v4} to
  get vectors $Y^{i}$. We eliminate $Y^{i-1}$ and $X$ at any element $e$
  where $Y^{i-1}_{e}=-$. For a resulting vector $Y^{i}$ it holds that
  $(Y^{i})^{-} \subset (Y^{i-1})^{-}$. Thus, at some point $(Y^{k})^{-}=
  \emptyset$ while $Y^{k}_{e}=0$ and $Y^{k}_{\ole}=+$ where $e \in T$
  is the element eliminated in step $k-1$. This vector $Y^{k}$ does not
  satisfy the Z-matroid property~\eqref{defZ}, which is a contradiction.

\item{(c)${}\implies{}$(d):}
  Suppose that there is a s.r.~circuit $C \in \mathcal{C}$, that is,
  $C_{e} \cdot C_{\ole} \leq 0$ for every $e \in S$. Let $C^{0}=C$. We
  apply consecutive circuit eliminations~\ciref{c4}. To get $C^{i}$,
  we eliminate $C^{i-1}$ with any fundamental circuit $\check C:=C(S,e)$
  at position $e \in T$ where $C^{i-1}_{e}=-$. By Lemma~\ref{lem:fundc}
  we have $\check C_{S\setminus\{\ole\}}\le 0$.

  After finitely many eliminations we end up with a circuit $C^{k}$ for
  which $C^{k}_{T} \geq 0$. Now we claim that $C^k_S\le0$: Indeed, if
  $e\in S$ such that $C^k_{\ole} = {+}$, then $C_{\ole} = +$, and thus
  $C_e\le 0$ because $C$~is \sr. Since we never eliminate at~$\ole$,
  all fundamental circuits~$\check C$ used in the eliminations satisfy
  $\check C_e\le 0$ as noted above. Hence $C^k_e\le0$. If on the other
  hand $C^k_{\ole} = 0$, then $C^k_e\le0$ by~\eqref{defZ}.

  Moreover, since $S$~is a basis, $\underline{C^k}\nsubseteq S$, and so
  there exists $e\in T$ with $C^k_e={+}$. Therefore $-C^{k}$ violates
  property~(c), a contradiction.

\item{(d)${}\implies{}$(a*):}
  Because of~(d), for every circuit $C$ there is an $e \in S$ such
  that $C_{e} \cdot C_{\ole}=+$. The sign vector $Y$ where $Y_{S} < 0$
  and $Y_{T} > 0$ is orthogonal to every circuit $C$, because the sign
  of $Y_{e} \cdot C_{e}$ is opposite to the sign of $Y_{\ole} \cdot
  C_{\ole}$. Hence such a $Y$ is a covector.
\end{description}

To finish the proof, notice that a matroid~$\M$ satisfies~(a*) if and
only if the reflection of its dual~$\reflection(\M^\ast)$ satisfies~(a);
analogously for (b*) and~(b), (c*) and~(c), and (d*) and~(d). Thus if
$\M$ satisfies~(a*), then $\reflection(\M^\ast)$~satisfies~(a), hence
also~(b), and so (using~\eqref{eq:reflstar}) $\M$~satisfies~(b*). The
missing implications (b*)${}\implies{}$(c*), (c*)${}\implies{}$(d*),
and (d*)${}\implies{}$(a) are proved analogously.
\end{proof}


\section{Algorithmic aspects} \label{sec:algo}

Let an OMCP($\hat{\mathcal{M}}$)  be given, where $\hat{\mathcal{M}}$~is
any one-element extension of an $n$-dimensional matroid~$\mathcal{M}$
on $E_{2n}$. We present \emph{simple principal pivot algorithms} to find
a solution. This kind of algorithm is a well-established solving method
for LCPs. Sometimes called \emph{Bard-type methods}, they were first
studied by Zoutendijk~\cite{Zou:Methods} and Bard~\cite{Bar:An-eclectic}.

Here we extend a recent result of~\cite{FonFukGar:Pivoting} to the
generalizing setting of OMCP. We prove below that the unique solution to
every OMCP($\hat{\mathcal{M}}$) where the underlying matroid $\mathcal{M}$
is a K-matroid, is found by every \spp~algorithm in a linear number of
pivot steps.

Let $\hat{\mathcal{M}}$ be given by an oracle which, for a basis~$B$ of
$\hat{\mathcal{M}}$  and a non-basic element $e \in \hat E_{2n}  \backslash B$,
outputs the unique fundamental circuit $C(B,e)$. A \spp~algorithm starts with a
fundamental circuit $C^{0}=C(B^{0},q)$ where $B^{0}$ is any
complementary basis. For instance, we can take $B^{0}=S$. It then proceeds in \emph{pivot
steps}. Assume that the $i$th step leads to a fundamental circuit
$C^{i}=C(B^{i},q)$. We require the complementary
condition~\eqref{V2COMP} to be an invariant, that is, $B^{i}$~is supposed to be
complementary. If $C^{i}$~is feasible, that is, the condition~\eqref{VPOS}
is satisfied, then $C^{i}$~is the solution and the algorithm
terminates. Otherwise, we obtain~$C^{i+1}$ as follows: choose an $e^{i}
\in B^{i}$ for which $C^{i}_{e^{i}}=-$ according to a \emph{pivot
rule}. Then the pivot
element~$e^{i}$ is replaced in the basis with its complement~$\overline{e^{i}}$,
that is, $B^{i+1}=B^{i}\backslash \{e^{i}\} \cup \{\overline{e^{i}}\}$.
Lemma~\ref{lem:basisP}
asserts that $B^{i+1}$ is indeed a basis. Then
$C^{i+1}=C(B^{i+1},q)$ is computed by feeding the oracle with basis
$B^{i+1}$ and the non-basic element~$q$. The algorithm then
proceeds with pivot step $i+2$.

\bigskip

\vbox{
\textsc{SimplePrincipalPivot}($\hat{\mathcal{M}}, B^{0}$)
\begin{algorithmic}
\STATE $i:=0$
\STATE $C^{0}:=C(B^{0},q)$
\WHILE{$(C^{i})^{-} \neq \emptyset$}
\STATE $\text{choose } e^{i} \in (C^{i})^{-} \text{ according to a pivot rule R}$
\STATE $B^{i+1}:=B^{i} \backslash \bigl\lbrace e^{i}
\bigr\rbrace \cup \bigl\lbrace \overline{e^{i}} \bigr\rbrace $
\STATE $C^{i+1}:=C(B^{i+1},q)$
\STATE $i:=i+1$
\ENDWHILE
\RETURN $C^{i}$
\end{algorithmic}
}

\bigskip

If the number of pivots is polynomial in $n$, then the whole algorithm
runs in polynomial time too, provided that the oracle computes the
fundamental circuit in polynomial time. This is the case if the LCP is
given by a matrix~$M$ and a right-hand side~$q$ as in~\eqref{eq:LCP}.

The number of pivots depends on the applied pivot rule and some rules
may even enter a loop on some inputs $\hat{\mathcal{M}}$. If the input
is a K-matroid extension, though, then the \textsc{SimplePrincipalPivot}
method is fast. We claim that no matter which pivot rule is applied,
\textsc{SimplePrincipalPivot} runs in a linear number of pivot steps
on every K-matroid extension. The following two lemmas are required to
prove this fact.  While the first one holds for every P-matroid extension,
the second is restricted to K-matroid extensions.

\begin{lem} \label{lem:P}
 If $\hat{\mathcal{M}}$ is a P-matroid extension, then
 $C^{i+1}_{\overline{e^{i}}} = {+}$ for every $i \geq 0$.
\end{lem}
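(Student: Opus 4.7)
My plan is to exhibit $C^{i+1}$ explicitly via a single strong circuit elimination, using the P-matroid hypothesis to pin down the sign of a crucial auxiliary fundamental circuit.

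First I would verify that everything in sight is well defined. By Lemma~\ref{lem:basisP} the complementary sets $B^i$ and $B^{i+1}$ of cardinality~$n$ are bases of~$\M$; since every circuit of~$\hat\M$ whose support lies in $E_{2n}$ is also a circuit of~$\M$, both sets are bases of $\hat\M$ as well. In particular the fundamental circuits $C^i=C(B^i,q)$, $C^{i+1}=C(B^{i+1},q)$, and the auxiliary circuit $F:=C(B^i,\overline{e^i})$ all exist and are unique, with $F_{\overline{e^i}}=+$ and $\underline{F}\subseteq B^i\cup\{\overline{e^i}\}$.

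The key step is to show that $F_{e^i}=+$. Since $B^i$ is complementary and $\underline{F}\subseteq B^i\cup\{\overline{e^i}\}$, for every $e\in S$ with $\{e,\overline{e}\}\ne\{e^i,\overline{e^i}\}$ at least one of $F_e$, $F_{\overline{e}}$ vanishes, so $F_e\cdot F_{\overline{e}}=0$. The only $e\in S$ where this product can be nonzero is the unique one in $\{e^i,\overline{e^i}\}\cap S$, and there the product equals $F_{e^i}\cdot F_{\overline{e^i}}=F_{e^i}$. Hence if $F_{e^i}\leq 0$, then $F$ would be a sign-reversing circuit, contradicting the P-matroid property of~$\M$. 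Therefore $F_{e^i}=+$.

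I would then apply the strong circuit elimination~\ciref{c4p} to $F$ and $C^i$ at $e^i\in F^+\cap(C^i)^-$ with witness $f=\overline{e^i}$, which lies in $F^+\setminus(C^i)^-$ because $C^i_{\overline{e^i}}=0$. This produces a circuit $Z$ with $Z^+\subseteq(F^+\cup(C^i)^+)\setminus\{e^i\}$, $Z^-\subseteq(F^-\cup(C^i)^-)\setminus\{e^i\}$, and $Z_{\overline{e^i}}\ne 0$. Since $\overline{e^i}\in F^+$ but $\overline{e^i}\notin F^-\cup(C^i)^-$, the sign inclusions force $Z_{\overline{e^i}}=+$. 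Moreover $\underline{Z}\subseteq B^{i+1}\cup\{q\}$, and from $F_q=0$, $C^i_q=+$ we get $Z_q\in\{0,+\}$; the case $Z_q=0$ is ruled out because it would yield $\underline{Z}\subseteq B^{i+1}$, contradicting $B^{i+1}$ being a basis. Hence $Z_q=+$, and uniqueness of the fundamental circuit gives $Z=C^{i+1}$, so $C^{i+1}_{\overline{e^i}}=+$.

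The main obstacle I anticipate is identifying the correct auxiliary circuit~$F$ and extracting from the P-matroid hypothesis the sharper conclusion that $F_{e^i}$ is strictly positive rather than merely nonzero; once this is in place, strong circuit elimination delivers $C^{i+1}$ almost mechanically.
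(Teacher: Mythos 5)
Your proof is correct, and it takes a genuinely different route from the paper's. The paper argues by contradiction: it first supposes $C^{i+1}_{\overline{e^i}}=-$, applies weak circuit elimination \ciref{c4} to $C^i$ and $-C^{i+1}$ at~$q$ to produce a circuit supported in $B^i\cup\{\overline{e^i}\}$ that is \sr\ (contradicting the P-matroid property), and then dispatches the case $C^{i+1}_{\overline{e^i}}=0$ separately by noting it would force $C^{i+1}=C(B^i,q)=C^i$, contradicting $C^{i+1}_{e^i}=0\ne{-}=C^i_{e^i}$. You instead proceed constructively: you introduce the auxiliary fundamental circuit $F=C(B^i,\overline{e^i})$, use the P-matroid hypothesis to pin down $F_{e^i}=+$ (your argument that $F_e\cdot F_{\ole}=0$ for all other complementary pairs, because $B^i\cup\{\overline{e^i}\}$ contains at most one element of each such pair, is sound), and then apply strong circuit elimination \ciref{c4p} with witness $f=\overline{e^i}$ to produce a circuit $Z$ that you identify with $C^{i+1}$ by uniqueness of fundamental circuits. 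The payoff of your approach is that it handles both the ``$-$'' and ``$0$'' alternatives at once and exhibits $C^{i+1}$ directly rather than by contradiction; the cost is the extra bookkeeping around $F$ and the need to rule out $Z_q=0$. Both arguments lean on the P-matroid hypothesis in essentially the same way (no \sr\ circuit), but you deploy it on a fundamental circuit whereas the paper deploys it on an eliminant of $C^i$ and $-C^{i+1}$.
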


\begin{proof}
 First suppose that $C^{i+1}_{\overline{e^{i}}}=-$ in some pivot step~$i+1$. Let $C'$
 be the result of a weak circuit elimination of $C^{i}$ and $-C^{i+1}$
 at~$q$. Then $C'$ is contained in~$B^i\cup\{\overline{e^i}\}$,
 and $C'_{e^{i}}\le0$ and $C'_{\overline{e^{i}}}\ge0$, in other
 words it is a s.r.~vector. According to the Definition~\ref{def:P}
 of a P-matroid, no s.r.~circuit can exist. Thus $C^{i+1}_{\overline{e^{i}}}\ge0$.

 Now suppose that $C^{i+1}_{\overline{e^{i}}}=0$. Then $C^{i+1}$ is
 also the fundamental circuit $C(B^i,q)$, hence $C^{i+1}=C^i$. This is
 a contradiction because $C^{i+1}_{e^i}=0\ne {-} = C^i_{e^i}$.
\end{proof}

\begin{lem} \label{lem:stays+}
 If $\hat{\mathcal{M}}$ is a K-matroid extension, then for every $f \in T$:
 $$\text{If } C^{h}_{f}\ge0 \text{ for some } h \geq 0,
 \text{ then } C^{k}_{f}\ge0 \text{ for every } k \geq h.$$
\end{lem}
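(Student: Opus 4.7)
By induction on $k-h\ge 0$ it suffices to prove the one-step claim: if $f\in T$ and $C^h_f\ge 0$, then $C^{h+1}_f\ge 0$. The cases $f=e^h$ (vacuous since $C^h_{e^h}={-}$) and $f=\overline{e^h}$ (immediate by Lemma~\ref{lem:P}) are trivial, so assume $f\in T\setminus\{e^h,\overline{e^h}\}$; for brevity write $e:=e^h$, $\ole:=\overline{e^h}$, $B:=B^h$.

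Introduce the auxiliary fundamental circuit $P:=C(B,\ole)$ of $\mathcal{M}$. I will show that the vector $Z$ obtained by applying the vector-elimination axiom~\veref{v4} to $C^h$ and $P$ at $e$ equals $C^{h+1}$: its support lies in $B^{h+1}\cup\{q\}$ and $Z_q={+}$, so by conformal decomposition combined with Lemma~\ref{lem:basisP} every component circuit must equal $C(B^{h+1},q)=C^{h+1}$. The sign bound $Z^-\subseteq(C^h)^-\cup P^-$ from~\veref{v4} then shows that $C^{h+1}_f={-}$ is only possible if $f\in P^-$, and we are reduced to proving $P_f\ge 0$. For the elimination to apply one needs $P_e={+}$: $P_e=0$ would put $P$'s support into the complementary basis $B^{h+1}$, contradicting that bases contain no circuits (Lemma~\ref{lem:basisP}), and $P_e={-}$ leads, via the~\veref{v4}-elimination of $-P$ and $C^h$ at $e$ combined with the same uniqueness argument, to $C^{h+1}_{\ole}={-}$, contradicting Lemma~\ref{lem:P}.

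The technical heart of the proof is the following structural claim, established by induction on $|F|:=|S\setminus B|$: for every K-matroid $\mathcal{M}$, every complementary basis $B$ and every $\ole\notin B$ (setting $e:=\overline{\ole}$), the fundamental circuit $P=C(B,\ole)$ satisfies (i)~$P_{\ole}=P_e={+}$, (ii)~$P_g\le 0$ for all $g\in S\setminus\{e,\ole\}$, and (iii)~$P_T\ge 0$. The base case $B=S$ follows at once from Lemma~\ref{lem:fundc} combined with the Z-matroid property~\eqref{defZ}. For the inductive step fix any $g\in F$ and set $B':=B\setminus\{\overline{g}\}\cup\{g\}$, so $|F'|=|F|-1$. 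If $\ole=g$, then $C(B,g)$ and $C(B',\overline{g})$ are circuits with common support $B\cup\{g\}=B'\cup\{\overline{g}\}$ and the same sign ${+}$ at $g$ (the latter by the IH applied to $C(B',\overline{g})$), so they coincide by~\ciref{c3}, and (i)--(iii) for $C(B,g)$ transfer from the IH. Otherwise $g\in S\setminus\{e,\ole\}$, so the IH for $C(B',\ole)$ yields $C(B',\ole)_g\le 0$; when this sign is $0$ the two circuits coincide, and when it is ${-}$ the circuit $C(B,\ole)$ is the~\veref{v4}-elimination of $C(B',\overline{g})$ and $C(B',\ole)$ at $g$, after which an element-by-element sign check using the IH on both circuits verifies (ii) and (iii). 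The equalities in~(i) then follow from~\veref{v4} in all subcases except $e\in S$ with $C(B',\overline{g})_e={-}$; in that last subcase $P_e=0$ collapses $P$'s support into a basis, while $P_e={-}$, combined with the already-proved~(ii) and~(iii), makes every pair $(g',\overline{g'})$, $g'\in S$, have non-positive product, so $P$ would be a sign-reversing circuit, contradicting the P-matroid property (Definition~\ref{def:P}).

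The main obstacle is the strengthened inductive hypothesis: a naive induction carrying only~(iii) breaks down because ruling out the bad elimination subcase in the step from $B'$ to $B$ requires $C(B',\ole)_g\le 0$, which is precisely part~(ii) at the previous level, so the three parts of the claim must be carried through together.
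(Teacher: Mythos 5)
Your proof is correct in substance but takes a genuinely different route from the paper's. The paper argues by contradiction at the first bad step~$l$, eliminates $C^l$ and $-C^{l+1}$ at~$q$ to obtain a vector supported on $B^l\cup\{\ole[l],q\}$, restricts it to the four-element principal minor on $\{f,\olf,e^l,\overline{e^l}\}$ (which is again a K-matroid by Lemma~\ref{lem:K}), and derives a contradiction with condition~(c) of Theorem~\ref{thm:eqK}. You instead reduce to a one-step claim and introduce the auxiliary fundamental circuit $P=C(B^h,\overline{e^h})$, show $C^{h+1}$ is the~\veref{v4}-elimination of $C^h$ and $P$ at~$e^h$, so that a sign drop at $f\in T$ would force $P_f={-}$, and then rule this out by a standalone structural lemma on fundamental circuits of \emph{arbitrary} complementary bases of a K-matroid, proved by its own induction on $|S\setminus B|$. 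This structural lemma is an interesting extension of Lemma~\ref{lem:fundc} (which only treats the basis~$S$) and captures what survives of the Z-sign pattern under complementary basis exchange even though Z-matroidness itself is not pivot-invariant; the paper sidesteps this by working in a small minor instead. Your approach avoids Lemma~\ref{lem:K} and Theorem~\ref{thm:eqK} entirely at the cost of a longer nested induction, so it is arguably more self-contained but less economical.

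One small inaccuracy: in the base case $B=S$ of your structural claim you attribute part~(i), $P_e=P_{\ole}={+}$, to Lemma~\ref{lem:fundc} and the Z-matroid property~\eqref{defZ}, but neither gives you $P_e={+}$. Lemma~\ref{lem:fundc} only yields $P_{\ole}={+}$, $P_{T\setminus\{\ole\}}=0$, and $P_{S\setminus\{e\}}\le 0$, and~\eqref{defZ} applied to $P$ is vacuous here. You need the P-matroid half of the K-hypothesis: $P_e=0$ would place $\underline{P}$ inside the complementary basis $(S\setminus\{e\})\cup\{\ole\}$ (Lemma~\ref{lem:basisP}), and $P_e={-}$ would make $P$ sign-reversing since every other $S$-component pairs with a zero $T$-component — exactly the argument you already give for the awkward subcase of the inductive step. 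With that attribution corrected, the proof goes through.
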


\begin{proof}
For the sake of contradiction suppose that the statement does not hold
and let $l \geq h$ be the smallest value such that $C^{l}_{f} \geq 0$,
but $C^{l+1}_{f}=-$. By Lemma~\ref{lem:P}, $f\ne\overline{e^l}$, and so
$f$~lies in~$B^l$ and in~$B^{l+1}$. Let $X$ be the result of a vector
elimination of
$C^{l}$ and $-C^{l+1}$ at~$q$. Note that $X_{e^{l}}=-$, $X_{f}=+$ and
$X_{\olf }=0$. In addition by
Lemma~\ref{lem:P} it holds that $X_{\overline{e^{l}}} = {-}$. Since $X_{q}=0$,
the sign vector
$X \backslash \{q\}$ is a vector of the K-matroid~$\mathcal{M}$. Now
let $F:=\overline{B^l}\setminus\{\olf,\overline{e^l}\}$. Consider the principal
minor $\mathcal{M} \backslash F / \overline{F}$, which is a matroid on
the element set $\bigl\{ f, \olf , e^{l}, \overline{e^{l}} \bigr\}$. By
Lemma~\ref{lem:K} it is also a K-matroid. Further it contains the vector
$X'=X \backslash \Bigl(\hat E_{2n} \backslash \bigl\{f, \olf , e^{l},
\overline{e^{l}}\bigr\}\Bigr)$
with $X'_{e^{l}}=-$,  $X'_{f}=+$, $X'_{\olf }=0$ and
$X'_{\overline{e^{l}}} = {-}$. The contradiction follows from
the fact that $-X'$ violates the K-matroid property~(c) in
Theorem~\ref{thm:eqK}.
\end{proof}

\begin {thm} \label{thm:fastK}
Every simple principal pivot algorithm runs in at most $2n$ pivot steps on every K-matroid extension.
\end{thm}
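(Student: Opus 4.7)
The strategy I would follow is to prove the stronger statement that each element $e \in E_{2n}$ is chosen as a pivot element $e^i$ at most once during the execution of \textsc{SimplePrincipalPivot}; since $|E_{2n}|=2n$, this immediately yields the bound of at most $2n$ pivot steps. Fix a pivot step $i$ with $e^i=e$, so that $C^i_e={-}$ and, after the pivot, $e \notin B^{i+1}$ while $\ole \in B^{i+1}$. I would then split into two cases according to whether $e$ lies in $T$ or in $S$, and show in both cases that $e$ cannot serve as a pivot element again.

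If $e \in T$, the argument is direct: since $C^{i+1}$ is supported on $B^{i+1} \cup \{q\}$ and $e \notin B^{i+1}$, we have $C^{i+1}_e = 0 \geq 0$. Applying Lemma~\ref{lem:stays+} with $f = e$ and $h = i+1$ gives $C^k_e \geq 0$ for all $k \geq i+1$, so $C^k_e = {-}$ can never occur again, and $e$ is not an eligible pivot element in any later step.

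If $e \in S$, the argument goes through the complement: by Lemma~\ref{lem:P}, $C^{i+1}_{\ole} = {+}$, and since $\ole \in T$, Lemma~\ref{lem:stays+} yields $C^k_{\ole} \geq 0$ for all $k \geq i+1$. Hence $\ole$ is never chosen as a pivot element after step $i$ (such a choice would require $C^k_{\ole} = {-}$), so $\ole$ never leaves the basis. Consequently $e$ cannot re-enter the basis, and in particular cannot be selected as a pivot element again.

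I expect the main conceptual step to be identifying the right invariant: rather than trying to control the number of negative coordinates of $C^i$ directly, one counts distinct pivot elements and bounds that count by $|E_{2n}|$. Once this is set up, the two preceding lemmas essentially hand over the rest—Lemma~\ref{lem:stays+} provides the forward monotonicity on the $T$-side needed in both cases, while Lemma~\ref{lem:P} supplies the crucial $C^{i+1}_{\ole} = {+}$ that bridges an $S$-pivot to a $T$-coordinate on which Lemma~\ref{lem:stays+} can act. The remaining work is purely bookkeeping about basis membership and supports of fundamental circuits.
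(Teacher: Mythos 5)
Your proof is correct, and it follows the same overall strategy as the paper: show that every element of $E_{2n}$ is chosen as a pivot element at most once, split according to whether the pivot element lies in $S$ or in $T$, and invoke Lemmas~\ref{lem:P} and~\ref{lem:stays+}. Your handling of the $e\in T$ case is, however, noticeably cleaner. The paper argues indirectly there: it notes that Lemma~\ref{lem:stays+} does not apply to $\ole\in S$, concedes that $\ole$ might later be chosen as a pivot element, and only then falls back on the $S$-case reasoning at that later step. You instead observe that once $e\in T$ leaves the basis we have $C^{i+1}_e=0\ge 0$ (since $\underline{C^{i+1}}\subseteq B^{i+1}\cup\{q\}$ and $e\notin B^{i+1}$), so Lemma~\ref{lem:stays+} applies directly with $f=e$ and $h=i+1$, and $e$ can never reappear in $(C^k)^-$. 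This removes the layer of indirection and makes the $T$-case argument as short and self-contained as the $S$-case one. Both versions are valid; yours buys a slightly more uniform and transparent case analysis at no cost, while the paper's phrasing makes explicit the observation that a $T$-pivot followed later by the complementary $S$-pivot permanently retires the pair.
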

\begin{proof}
 We prove that, no matter which pivot rule R one applies, every element
 $e \in E_{2n}$ is chosen at most once as the pivot element. Consider any pivot
step $h$ in the \textsc{SimplePrincipalPivot} algorithm. First suppose that
the pivot element $e^{h}$ is in $S$. According to Lemma~\ref{lem:P}
$C^{h+1}_{\overline{e^{h}}} \geq 0$. Moreover, for every $k \geq h$ we
have $C^{k}_{\overline{e^{h}}} \geq 0$ (Lemma~\ref{lem:stays+}) and
$C^{k}_{e^h}=0$. In
other words, the elements $e^{h}$ and $\overline{e^{h}}$ cannot become pivot
elements in
later steps. Second, suppose that the pivot $e^{h}$ is in $T$. Then the
argument above fails. Even though $C^{h+1}_{\overline{e^{h}}} \geq 0$
(Lemma~\ref{lem:P}), we cannot conclude that $C^{k}_{\overline{e^{h}}} \geq
0$ for every $k \geq h$, because Lemma~\ref{lem:stays+} does not apply. It
may eventually happen
for some $k$ that $\overline{e^{h}}$ is chosen as pivot $e^{k}$.
However if so, our first argument applies for pivot step $k$ and neither
$\overline{e^{h}}$ nor $e^{h}$ can become
pivot elements again.
\end{proof}

\begin{remark}
If \textsc{SimplePrincipalPivot} starts with the basis $B^0=S$, then
at most $n$ pivot steps are needed, because $C^0_T=0$ and hence, by
Lemma~\ref{lem:stays+}, $C^i_T\ge0$ for all~$i$.
\end{remark}

\section{Extension to principal pivot closures}

So far, we have considered a matroid $\mathcal{M}$ on a complementary set
$E_{2n}$ where the maximal complementary set~$S$ is fixed from the
beginning. In the following, $S'$~is an arbitrary complementary subset
of size~$n$ and $T'=\{\ole :e \in S'\}$.

\begin{defn}
 A matroid $\mathcal{M}$ on $E_{2n}$ is a \emph{Z*-matroid} if there is a
 complementary set $S' \subseteq E_{2n}$ of cardinality~$n$ such that for
 $T'=\{\ole:e\in S'\}$ and every circuit~$C$ of~$\M$ we have:
 \[
  \begin{split}
  \text{If } C_{T'} \geq 0 &\text{, then} \\
  & C_{\ole}={+} \text{ for all } e \in S' \text{ with } C_{e}=+ \text .
  \end{split}
 \]
\end{defn}

Analogously $\mathcal{M}$ is a \emph{K*-matroid} if it is a P-matroid and
a Z*-matroid. Note that the class of Z*- and K*-matroids are the closures,
under all principal pivot transforms, of Z- and K-matroids,
respectively.  Moreover, Proposition~\ref{defZd}, Lemma~\ref{lem:K}
up to Theorem~\ref{thm:fastK} have equivalent counterparts for these
closure classes, obtained by substituting $S$ by $S'$ and accordingly $T$
by $T'$ in the original statements. Hence we get the following.

\begin{cor}
 Every simple principal pivot algorithm finds the solution to
 OMCP($\mathcal{\hat{M}}$), where $\mathcal{\hat{M}}$ is a K*-matroid
 extension, in at most $2n$ pivot steps.
\end{cor}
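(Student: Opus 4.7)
My approach follows the paper's remark that every statement from Proposition~\ref{defZd} through Theorem~\ref{thm:fastK} has a verbatim K*-matroid counterpart, obtained by substituting the designated complementary basis $S'$ for $S$ (and hence $T'$ for $T$) throughout. The corollary then follows immediately from the $S'$-version of Theorem~\ref{thm:fastK}.

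To justify the substitution, I would check that each intermediate result depends only on the complementarity involution $e \mapsto \ole$ and on the Z-matroid property relative to one distinguished basis. The P-matroid property and Lemma~\ref{lem:basisP} are manifestly independent of the choice of that basis, so Lemma~\ref{lem:P} applies to every P-matroid extension unchanged. The Z*-matroid definition yields, for each $e \in T'$, a fundamental circuit $C(S',e)$ with $C(S',e)_e=+$, $C(S',e)_{T'\setminus\{e\}}=0$ and $C(S',e)_{S'\setminus\{\ole\}}\le 0$, which is the K*-counterpart of Lemma~\ref{lem:fundc}. With these fundamental circuits available, the proofs of Lemma~\ref{lem:K} and of Theorem~\ref{thm:eqK} transfer word-for-word with $S'$ and $T'$ in place of $S$ and $T$; in particular, principal minors of K*-matroids are again K*-matroids, and for every circuit $C$ one has $C_{S'}\ge 0\implies C_{T'}\ge 0$.

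With these analogs in hand, the K*-version of Lemma~\ref{lem:stays+} reads: for every $f\in T'$, if $C^h_f\ge 0$ for some $h$ then $C^k_f\ge 0$ for all $k\ge h$; the original proof (vector-eliminating $C^l$ and $-C^{l+1}$ at $q$ and then restricting to the $4$-element principal minor on $\{f,\olf,e^l,\overline{e^l}\}$) goes through unchanged, using the K*-analog of Theorem~\ref{thm:eqK}(c). The proof of Theorem~\ref{thm:fastK} then gives the $2n$ bound by the same case distinction between pivots in $S'$ and in $T'$, showing that every element of $E_{2n}$ is selected as pivot at most once. The main obstacle is purely bookkeeping: one has to verify that the \textsc{SimplePrincipalPivot} algorithm itself never references $S$ explicitly — which it indeed does not, since it only inspects fundamental circuits $C(B^i,q)$ and their negative entries — so that the same algorithm run on the same K*-matroid extension is legitimately analyzed by the $S'$-versions of the lemmas. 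No new ideas beyond this systematic translation are needed.
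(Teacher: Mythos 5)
Your proposal is correct and takes exactly the same route as the paper, which also argues by systematically substituting $S'$ for $S$ and $T'$ for $T$ in Proposition~\ref{defZd} through Theorem~\ref{thm:fastK} and then invoking the $S'$-version of Theorem~\ref{thm:fastK}. Your additional checks (that the P-matroid property and \textsc{SimplePrincipalPivot} never reference $S$ explicitly, and that Lemma~\ref{lem:fundc} has an $S'$-analog feeding into Lemmas~\ref{lem:K} and~\ref{lem:stays+}) are just a more careful spelling-out of the same observation.
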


The reader might wonder why we introduced Z-matroids and K-matroids at
all and did not start off with their principal pivot closures. One good
reason for our approach is to point out the correspondence
of LCP-realizable Z-matroids and their matrix counterparts, see
Proposition~\ref{prop:realZ}. With respect to this, the main problem is that
a principal pivot transform of a Z-matroid or a K-matroid is in general
not a Z-matroid, a K-matroid respectively. However every principal pivot
algorithm is still able to solve an LCP($M,q$) where $M$~is a principal pivot
transform of a K-matrix in a linear number of pivot steps.

\end{document}